\theoremstyle{plain}
\newcommand{\E}{\mathbb E}
\newcommand{\R}{\mathbb R}
\newcommand{\C}{\mathcal C}
\newcommand{\D}{\mathcal D}
\newcommand{\cF}{\mathcal F}
\newcommand{\LL}{\mathcal L}
\newcommand{\cJ}{\mathcal J}
\newcommand{\ep}{\epsilon}
\def\P{{\mathbb P}}
\newcommand{\A}{\mathcal A}
\newenvironment{remark}[1][Remark]{\begin{trivlist}
\item[\hskip \labelsep {\bf Remark}]}{\end{trivlist}}
\newtheorem{theorem}{Theorem}[section]
\newtheorem{lemma}[theorem]{Lemma}
\newtheorem{corollary}[theorem]{Corollary}
\newtheorem{proposition}[theorem]{Proposition}
\theoremstyle{definition}
\title{The dividend problem with a finite horizon}
\author[Tiziano De Angelis and Erik Ekstr\"om]{Tiziano De Angelis and Erik Ekstr\"om$^1$}
\keywords{The dividend problem; singular control; optimal stopping}
\address{T.~De Angelis: School of Mathematics, University of Leeds, LS2 9JT Leeds, UK. }
\address{E.~Ekstr\"om: Department of Mathenatics, Uppsala University, Box 480, 75106 Uppsala, Sweden. }
\date{\today}
\email{t.deangelis@leeds.ac.uk, ekstrom@math.uu.se}
\thanks{$^1$ Support from the Swedish Research Council (VR) is gratefully acknowledged.}
\begin{document}

\begin{abstract} 
We characterise the value function of the optimal dividend problem with a finite time horizon as the unique classical solution of a suitable Hamilton-Jacobi-Bellman equation. The optimal dividend strategy is realised by a Skorokhod reflection of the fund's value at a time-dependent optimal boundary. 
Our results are obtained by establishing for the first time a new connection between singular control problems with an absorbing boundary and optimal stopping problems on a diffusion reflected at $0$ and created at a rate proportional to its local time.
%diffusion reflected at an elastic boundary.
\end{abstract}

\maketitle

\section{Introduction}

The dividend problem is a foundational problem in actuarial mathematics whose formulation dates back to De Finetti's work \cite{DeF}.
The model addresses the question of how a fund or an insurance company should distribute dividends to its beneficiaries prior to the time of ruin. After De Finetti's seminal work, the dividend problem has attracted the interest of many mathematicians and economists who produced a substantial body of literature on the subject. An extensive review of existing models and related mathematical results was published by 
Avanzi \cite{A09} in 2009, and the list of papers relative to the topic has continued to increase since.

Here we consider a canonical formulation of the problem in a simple diffusive setting that was proposed by Radner and Shepp \cite{RS96} and later considered also by \cite{JS} among many others. The value of a fund after dividends have been paid out evolves according to 
\begin{align*}%\label{XD}
X^D_t=x+ \mu t + \sigma B_t -D_t,\qquad t\ge0,
\end{align*}
where $\mu$ and $\sigma>0$ are constants, $B$ is a Brownian motion and $D_t$ is the cumulative amount of dividends paid out
up to time $t$. 
The objective of the fund manager is to maximise the expected present value of future dividends up to the fund's default time $\gamma^D:=\inf\{t\geq 0:X^D_t\leq 0\}$. In addition, we also assume that the manager has a finite time horizon $T$ for the investment plan. 
The assumption of a finite horizon is the main difference between our model and the vast majority of the existing 
literature (including \cite{RS96} and \cite{JS}).
From the financial point of view, this restriction on the set of admissible dividend strategies is very natural and it simply means that an investment fund is liquidated at a pre-specified future date. 

If the fund's value at time $t\in[0,T]$ is $x>0$, the optimisation problem that the fund manager is faced with may be stated as follows:
\begin{equation}\label{P0}
\text{Find $D^*$ that maximises} \:\:\:\cJ(t,x;D):=\E\left[\int_{0-}^{\gamma^D\wedge (T-t)}e^{-rs}dD_s\right]
\end{equation}
where we integrate from $0\,-$ to account for the possibility of a jump of $D$ at time zero. 
From the mathematical point of view this is a problem of singular stochastic control (SSC) on a finite time horizon in which the underlying process is absorbed at zero. It is important to notice that zero is a regular point for the uncontrolled process $R_t=x+\mu t+\sigma B_t$ so that default may occur prior to $T$ with positive probability even if no dividends are distributed.

In this work we solve \eqref{P0} by constructing an optimal dividend strategy and by proving that the corresponding value function $V(t,x):=\cJ(t,x;D^*)$ is a classical solution of the Hamilton-Jacobi-Bellman equation, i.e.~in particular $V\in \mathcal C^{1,2}([0,T)\times(0,\infty))$. The optimal dividend strategy is shown to be the solution of a Skorokhod reflection problem at an appropriate time-dependent optimal boundary. 

To accomplish our task we develop a self-contained, fully probabilistic proof that hinges on a new type of connection between SSC problems and optimal stopping. Indeed, we show that $V_x=U$, where $U$ is the value function of an optimal stopping problem whose underlying process is a Brownian motion with drift $\mu$ and variance $\sigma^2$, which gets \emph{reflected at zero and created at a rate proportional to its local time} (cf.~\cite{P14}). Although links between optimal stopping and singular control have been known for many years (see for example \cite{BC67, BSW80, BenthReikvam, BoetiusKohlmann, BudhirajaRoss, DF, DF2, DeAFeMo15b, DeAFeMo15c, ElKK88, GT08, Karatzas85, KS, KS3, Taksar85} among others) our result makes a fundamental forward leap in this field. For the first time we establish that an absorbing boundary in SSC translates to a reflecting boundary with creation in optimal stopping. This new characterisation proves to be a powerful tool to tackle problem \eqref{P0} in an effective way.

We remark that, despite the vast existing literature on SSC, the study of problems that combine absorbing boundary behaviour with a finite time horizon is still a major theoretical challenge. For example, we observe that in the literature on optimal dividend problems an analytical characterisation of the optimal strategy and of the value function can only be found in models with infinite time horizon (for a theoretical study of problems of this kind one may refer to \cite{SLG}). These models are substantially easier to deal with compared to \eqref{P0} because they give rise to variational problems in the form of ordinary differential equations whereas our problem is associated to a parabolic one.

To the best of our knowledge, an analytical study of the problem in \eqref{P0} has only been addressed very recently by Grandits in a series of two papers, \cite{G1} and \cite{G2}, followed by a third one \cite{G} containing an extension of the canonical model. 
In these papers, Grandits uses methods from PDE and free-boundary analysis that rely on several transformations of the variational problem associated to \eqref{P0}. 
In \cite{G1}, the author obtains $\ep$-optimal boundaries, whereas in \cite{G2} he manages to pass to the limit as $\ep\to0$ and 
shows that the optimal strategy is of barrier type. Moreover, 
Grandits proves that the optimal boundary is continuous and that the value function $V$ is continuous with locally bounded weak derivatives $V_t$ and $V_{xx}$. Under strong assumptions on the regularity of the boundary he also derives asymptotic estimates for $t\to T$. 

Our new connection between the SSC problem \eqref{P0} and optimal stopping enables us to use powerful methods from optimal stopping theory in the study of the dividend problem. This leads to a self-contained probabilistic analysis that complements and improves results in \cite{G1} and \cite{G2}. We obtain spatial concavity and $\C^{1,2}$-regularity of the value function, monotonicity of the boundary and, without further assumptions, the boundary's asymptotic behaviour at $T$ along with its characterisation as the unique continuous solution of an integral equation (both 
these properties are actually consequences of results relative to the Russian option, see the last remark in Section \ref{sec:osp-intro} and results in Section \ref{sec:remarks}). 

The paper is organised as follows. In Section~\ref{sec:set-up} we introduce the dividend problem with a finite time horizon in some further detail, and 
we provide a verification theorem.
In Section~\ref{sec:osp-intro} we introduce a related 
optimal stopping problem with a peculiar boundary condition at $0$, and we state our main result, Theorem~\ref{main}, which shows the connection between 
these two problems. To prove Theorem~\ref{main}, we begin our study of the optimal stopping problem 
in Section~\ref{sec:osp} by proving continuity of the value function as well as existence and continuity of the optimal stopping boundary.
To apply a verification result, however, additional regularity of the optimal stopping problem is needed,
which is the main contribution of Sections~\ref{sec:Ux} (spatial regularity) and \ref{sec:Ut} (regularity in time). The proof of Theorem~\ref{main} is instead contained in Section \ref{sec:proof}.
Finally, Section~\ref{sec:remarks} gives a couple of concluding remarks concerning additional properties of the optimal boundary.

\section{The optimal dividend problem}
\label{sec:set-up}

Denote by $X^D=(X^D_s)_{s\in[0,\infty)}$ the value of a fund after dividends have been paid out according to 
a strategy $D$. We assume that
\begin{align}\label{XD}
X^D_s=x+ \mu s + \sigma B_s -D_s,
\end{align}
where $x\geq 0$, $\mu$ and $\sigma>0$ are constants, $B$ is a standard Brownian motion and $D$ is a non-negative, non-decreasing and 
right-continuous process (adapted to the filtration generated by $B$)
with the interpretation that $D_s$ represents the accumulated dividends paid out until time $s$. 
In particular, if $D_0>0$, then a lump sum $D_0$ is paid out at time 0. We only consider dividend strategies that 
satisfy $D_s-D_{s-}\leq X^D_{s-}$ at all times $s\in[0,\infty)$ (with a convention that $D_{0-}=0$), and we denote the set of 
such dividend strategies $\mathcal A$.

For a given dividend strategy $D\in\mathcal A$, denote by
\[\gamma^D:=\inf\{s\geq 0:X^D_s\leq 0\}\]
the (possibly infinite) default time of the firm, and consider the stochastic control problem
\begin{equation}
\label{V}
V(t,x)=\sup_{D\in\A} \E_x\left[\int_{0-}^{\gamma^D\wedge (T-t)}e^{-rs}dD_s\right].
\end{equation}
Here $\E_x[\,\cdot\,]=\E[\,\cdot\,|X^D_{0-}=x]$, $T>0$ is a given time horizon, and we refer to problem \eqref{V} as the {\bf dividend problem with finite horizon}.

\begin{remark}
The integral in \eqref{V} is interpreted in the Riemann-Stiltjes sense. In particular, the lower limit $0-$ of integration 
accounts for the contribution from an initial dividend payment $D_0>0$.
We also point out that choosing a strategy with $D_0=x$ in \eqref{V} yields the trivial inequality $V\geq x$.
\end{remark}

\begin{remark}
Notice that in \eqref{V} we consider the optimisation problem as if it were started at time $0-$ (i.e.~before dividends are paid) but with a time horizon equal to $T-t$. This is justified because $X^D$ is time-homogeneous and, if at time $t$ the fund's value before dividends are paid is $x>0$, then the residual time of the optimisation is $T-t$. 
Moreover we also point out that in the rest of the paper we use the time-space process $(t+s,X^D_s)_{s\in[0,T-t]}$ under the measure $\P_x$. The latter is equivalent to the process $(s,X^D_s)_{s\in[t,T]}$ under the measure $\P_{t,x}(\,\cdot\,)=\P(\,\cdot\,|X^D_{t-}=x)$.   
%Note that dividends are paid out during the time interval $[0-,T-t]$\todo{How to interpret $[0-,T-t]$?}, and that
%the process $X^D$ is started at time $0$.
%By time-homogeneity of the problem, the dependence on time in \eqref{V} is only through the time $T-t$ left to $T$,
%and an alternative and equivalent formulation would be that dividends are paid out during the time interval
%$[0-,T-t]$ with an underlying process $X^D$ started at time $t$.
\end{remark}

Denote by $\LL$ the differential operator 
\begin{align}\label{def:L}
\LL=\partial_t + \frac{\sigma^2}{2}\partial_x^2+\mu \partial_x-r.
\end{align}
We have the following verification theorem.

\begin{theorem}
\label{verification}
{\bf (Verification).}
Let a function $v\in\mathcal C([0,T]\times[0,\infty))\cap\mathcal C^{1,2}([0,T)\times(0,\infty))$ be given. Assume that
\begin{itemize}
\item [(i)]
$\max\{\mathcal L v, 1-v_x\}= 0$ on $[0,T)\times(0,\infty)$;
\item[(ii)]
$v(t,0)=0$ for all $t\in[0,T]$;
\item[(iii)]
$v(T,x)=x$ for $x\in[0,\infty)$.
\end{itemize}
Further assume that there exists a continuous function $a:[0,T]\to[0,\infty)$ with $a(T)=0$ such that 
\begin{itemize}
\item[(iv)]
$\mathcal L v=0$ for $(t,x)\in[0,T)\times(0,\infty)$ with $0<x<a(t)$;
\item[(v)]
$v_x=1$ for $(t,x)\in[0,T]\times(0,\infty)$ with $x\geq a(t)$.
\end{itemize}
Then $V=v$. Moreover, 
\begin{align}\label{optcont}
D^a_s:=\sup_{0\leq u\leq s}(x + \mu u + \sigma B_u-a(t+u))^+
\end{align}
is an optimal strategy in the sense that
\[V(t,x)=  \E_x\left[\int_{0-}^{\gamma^{D^a}\wedge (T-t)}e^{-rs}dD^a_s\right].\]
\end{theorem}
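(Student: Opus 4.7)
The plan is the classical It\^o-verification scheme, executed in two halves. Write $\tau := \gamma^D \wedge (T-t)$ throughout. For the first half, I would fix an arbitrary $D \in \A$ (assuming without loss that $\cJ(t,x;D) < \infty$) and apply It\^o's formula for c\`adl\`ag semimartingales to $e^{-ru} v(t+u, X^D_u)$. Since $v$ is only $\C^{1,2}$ on the open set $[0,T)\times(0,\infty)$, a localisation with stopping times $\tau_n \uparrow \tau$ that keep the process inside would be used. The key algebraic step is to rewrite each jump as
\begin{align*}
v(t+u, X^D_u) - v(t+u, X^D_{u-}) = -\int_0^{\Delta D_u} v_x(t+u, X^D_{u-} - z)\,dz,
\end{align*}
yielding after rearrangement the identity
\begin{align*}
v(t,x) = e^{-r\tau_n} v(t+\tau_n, X^D_{\tau_n}) - \int_0^{\tau_n} e^{-ru} \LL v\,du - M_{\tau_n} + \bar\D(\tau_n),
\end{align*}
where $M$ is a local martingale and $\bar\D(\tau_n)$ aggregates the $v_x$-weighted dividend increments (with a leading term $\int_0^{D_0} v_x(t,x-z)\,dz$ handling any lump-sum at $u=0$). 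Hypothesis (i) gives simultaneously $\LL v \leq 0$ and $v_x \geq 1$, so $-\int \LL v\,du \geq 0$ and $\bar\D(\tau_n) \geq \int_{0-}^{\tau_n} e^{-ru}\,dD_u$; the boundary conditions (ii) and (iii) (combined with $v \geq 0$, obtained by integrating $v_x \geq 1$ from $v(t,0)=0$) make the terminal value $v(t+\tau_n, X^D_{\tau_n})$ non-negative. After localising $M$, taking expectations, and sending $n \to \infty$ via monotone/dominated convergence, one concludes $v(t,x) \geq \cJ(t,x;D)$, and hence $v \geq V$.

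For the second half, I would repeat the It\^o identity with $D = D^a$ and show that every inequality becomes an equality. From the Skorokhod formula \eqref{optcont} one reads that $u \mapsto D^a_u$ is continuous on $(0, T-t]$, that $X^{D^a}_u \leq a(t+u)$ for every $u \geq 0$, that $dD^a$ is supported on $\{u: X^{D^a}_u = a(t+u)\}$, and that the sole possible discontinuity is the initial lump $D^a_0 = (x-a(t))^+$. Hypothesis (iv) then forces $\LL v(t+u, X^{D^a}_{u-}) = 0$ $du$-a.e.~along the path, and hypothesis (v) forces $v_x \equiv 1$ on the support of every measure contributing to $\bar\D$, including the segment $[a(t), x]$ swept by the initial jump; hence $\bar\D(\tau) = \int_{0-}^{\tau} e^{-ru}\,dD^a_u$ exactly. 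Moreover, because $a(T)=0$ and $a$ is continuous, the Skorokhod reflection forces $X^{D^a}_{T-t} \leq a(T) = 0$, so $\gamma^{D^a} \leq T-t$ almost surely and $v(t+\tau, X^{D^a}_\tau) = v(t+\tau, 0) = 0$ by (ii). Reproducing the limit argument with equalities throughout delivers $v(t,x) = \cJ(t,x;D^a) \leq V(t,x)$, establishing both $v = V$ and the optimality of $D^a$.

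The hardest step will be the localisation/limit passage: one must control $e^{-r\tau_n} v(t+\tau_n, X^D_{\tau_n})$ and $\int_{0-}^{\tau_n} e^{-ru}\,dD_u$ uniformly in $n$. For arbitrary admissible $D$, finiteness of $\cJ(t,x;D)$ and continuity of $v$ on $[0,T]\times[0,\infty)$ provide what is needed; for the candidate $D^a$, the explicit bound $D^a_{T-t} \leq x + \mu(T-t) + \sigma \sup_{u \leq T-t} B_u$ has finite expectation and suffices. A subsidiary technical point is justifying from \eqref{optcont} the continuity of $u \mapsto D^a_u$ on $(0,T-t]$ and the support property of $dD^a$; both are standard flat-off/continuity properties of the Skorokhod map with a continuous time-dependent reflecting barrier.
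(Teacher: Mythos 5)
Your proposal is correct and follows the same It\^o-verification scheme as the paper: apply It\^o's formula (suitably localised inside $[0,T)\times(0,\infty)$) to $e^{-ru}v(t+u,X^D_u)$, decompose into the $\LL v$ term, the $v_x$-weighted dividend increments, the jump terms written as $-\int_0^{\Delta D_u}v_x(\cdot,X^D_{u-}-z)\,dz$, and the stochastic integral; use (i) to obtain $v\geq \cJ(t,x;D)$ for arbitrary $D$, and (iv)--(v) to obtain equality for $D^a$. The only point where you deviate is the passage to the limit in the second half: the paper localises with $\gamma^\ep=\inf\{s:X^{D^a}_s\leq\ep\}\wedge(T-t-\ep)$ and invokes Dini's theorem to show $v(t+\gamma^\ep,X^{D^a}_{\gamma^\ep})\to 0$, whereas you note directly that the Skorokhod reflection and $a(T)=0$ force $X^{D^a}_{T-t}\leq 0$, hence $\gamma^{D^a}\leq T-t$ a.s.\ and $v(t+\tau,X^{D^a}_\tau)=v(t+\tau,0)=0$. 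Your observation is a clean shortcut that removes the need for the Dini argument; otherwise the two proofs are the same.
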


\begin{proof}
Fix $(t,x)\in[0,T)\times(0,\infty)$. For a given dividend strategy $D\in\mathcal A$, denote by $D^c$ the continuous part of $D$, and
let, for $\ep>0$, 
\[\gamma^\ep:=\inf\{s\geq 0:X^D_s\leq \ep\}\wedge (T-t-\ep).\] 
Then
\begin{eqnarray*}
e^{-r\gamma^\ep}v(t+\gamma^\ep,X^D_{\gamma^\ep}) &=& v(t,x)
+
\int_0^{\gamma^\ep} e^{-rs}\mathcal L v(t+s,X^D_{s-})\, ds \\
&&
-\int_0^{\gamma^\ep}e^{-rs}v_x(t+s,X^D_{s-})\,dD^c_s\\
&&  + \sum_{0\leq s\leq \gamma^\ep} e^{-rs}\left(v(t+s,X^D_s)-v(t+s,X^D_{s-})\right)\\ 
&&
+\int_0^{\gamma^\ep} e^{-rs}\sigma v_x(t+s,X^D_{s-})\,dB_s.
\end{eqnarray*}
Note that $v_x$ is bounded on the set $[0,T-\ep]\times[\ep,\infty)$ (recall (v)), so the last integral is a (stopped) martingale.
Taking expected values of both sides gives 
\begin{eqnarray}
\label{exp}
\notag\E_x \left[e^{-r\gamma^\ep}v(t+\gamma^\ep,X^D_{\gamma^\ep}) \right]
&=& v(t,x) 
+ \E_x\left[\int_0^{\gamma^\ep} e^{-rs}\mathcal L v(t+s,X^D_{s-})\, ds \right]\\
&& \hspace{-20mm}
-\E_x\left[\int_0^{\gamma^\ep}e^{-rs}v_x(t+s,X^D_{s-})\,dD^c_s\right]\\
\notag
&& \hspace{-20mm} 
+\E_x\left[\sum_{0\leq s\leq \gamma^\ep} e^{-rs}\left(v(t+s,X^D_s)-v(t+s,X^D_{s-})\right)\right].
\end{eqnarray}
We notice that (i) implies $v_x\ge 1$ and therefore $v\geq 0$ follows from (ii). Using also $\LL v\leq 0$ and rewriting 
\[v(t+s,X^D_s)-v(t+s,X^D_{s-})=-\int_{0}^{\Delta D_s}v_x(t+s,X^D_{s-}-y)dy\]
yields
\[v(t,x)\geq \E_x\left[\int_{0-}^{\gamma^\ep}e^{-rs}\,dD_s\right].\]
Letting $\ep\to 0$ gives 
\[v(t,x)\geq \E_x\left[\int_{0-}^{\gamma^D\wedge(T-t)}e^{-rs}\,dD_s\right],\]
and since $D\in\mathcal A$ is arbitrary, $v\geq V$.

To prove the opposite inequality, let $D^a$ be the strategy given by
\[D^a_s=\sup_{0\leq u\leq s}(R_u-a(t+u))^+,\] 
where $R_u=x + \mu u + \sigma B_u$. Then $X^{D^a}_s=R_s-D^a_s$, and we notice that $(X^{D^a},D^a)$ is the solution of the Skorokhod reflection problem at the boundary $a(\,\cdot\,)$. Therefore (iv), (v) and \eqref{exp} give
\[v(t,x)=\E_x \left[e^{-r\gamma^\ep}v(t+\gamma^\ep,X^{D^a}_{\gamma^\ep})\right]+
\E_x\left[\int_{0-}^{\gamma^\ep}e^{-rs}\,dD^a_s\right].\]
Recall that $a$ is continuous with $a(T)=0$. By continuity of $v$, and since $x\mapsto v(t,x)$ is increasing due to (i), we have by Dini's theorem 
that $0\leq v(t+\gamma^\ep,X^{D^a}_{\gamma^\ep})\leq h(\ep)$ for some function $h$ with $h(\ep)\to 0$ as $\ep\to 0$.
Thus
\[v(t,x)\leq h(\ep)+
\E_x\left[\int_{0-}^{\gamma^\ep}e^{-rs}\,dD^a_s\right]\to \E_x\left[\int_{0-}^{\gamma^{D^a}\wedge(T-t)}e^{-rs}\,dD^a_s\right]\]
as $\ep \to 0$. Consequently, we also have the inequality $v\leq V$, and the strategy $D^a$ is optimal.
\end{proof}

\begin{remark}
As noted above, the inequality $V\geq x$ always holds. Moreover, if $\mu\leq 0$, then actually $V=x$, and the optimal strategy is to
immediately distribute the whole capital as dividends. 
To see this, notice that with $v(t,x)=x$ and $a(t)=0$ we have $\mathcal L v=\mu -rx\leq 0$, so that (i)-(v) of Theorem \ref{verification} are satisfied. From now on, we only consider the case $\mu>0$.
\end{remark}

\section{An optimal stopping problem with local time}
\label{sec:osp-intro}

Our approach to solving the optimisation problem given by \eqref{V} is to connect it to a suitable problem of optimal stopping. In order to find the correct candidate for the latter we begin by making some useful heuristic observations. 

If $V$ satisfies the variational problem cast in Theorem~\ref{verification}, and if in addition $V_t$ is continuous everywhere, then 
$V_t(t,0)=0$ for all $t\in[0,T)$ due to (ii). It follows that (iv) gives the boundary condition
\begin{align}\label{el1}
\tfrac{\sigma^2}{2}V_{xx}(t,0+)=-\mu V_x(t,0+),\qquad \text{for $t\in[0,T)$}.
\end{align}
Setting $u:=V_x$ we now notice that $u$ should solve, at least formally,
\begin{align}\label{freeb}
\left\{
\begin{array}{ll}
\mathcal L u=0 & \text{for $(t,x)\in[0,T)\times(0,\infty)$ with $0<x<a(t)$,}\\[+4pt]
u\ge 1 & \text{for $(t,x)\in[0,T)\times(0,\infty)$,}\\[+4pt]
u= 1 & \text{for $(t,x)\in[0,T)\times(0,\infty)$ with $x\ge a(t)$,}\\[+4pt]
u(T,x)=1 & \text{for all $x\in[0,\infty)$,}
\end{array}
\right.
\end{align}
with the additional boundary condition
\begin{align}\label{el2}
u_{x}(t,0+)=-\tfrac{2\mu}{\sigma^2} u(t,0+) \qquad \text{for $t\in[0,T)$.}
\end{align}
The boundary value problem \eqref{freeb} is reminiscent of the one associated to an optimal stopping problem with payoff of immediate stopping equal to $1$. Moreover, \eqref{el2} is similar to the so-called \emph{Feller's elastic boundary condition} at zero for a diffusion that lives on $\R_+$ (except for the minus sign on the right-hand side of \eqref{el2}). This observation is the key to finding the right connection to optimal stopping. 

Recall that (see for example \cite[Ch.~X, Exercise~1.15]{RY}), given a real-valued Markov process $(X_t)_{t\ge0}$ and 
and an additive functional $A$, one can construct the killed process 
\[\widetilde X_t=\left\{\begin{array}{ll}
X_t &\mbox{on }\{A_t< e\}\\
\Delta & \mbox{on }\{A_t\geq e\}.\end{array}\right.\]
Here $e$ is an exponentially distributed random variable with parameter 1 which is independent of $X$, 
$\Delta$ is a cemetery state, and  
this process has the associated semigroup $(\mathsf P_t)_{t\ge0}$ given by
\[\mathsf P_tf(x)=\E_x\left[e^{- A_t}f(X_t)\right].\]
Moreover, if $A_t=\lambda L^0_t$, where $(L^0_t)_{t\ge 0}$ is the local time of $X$ at 0, then 
the process $\widetilde X$ is a diffusion with elastic behaviour at zero, the associated semigroup is given by
\begin{align}\label{el-semg}
\mathsf P_tf(x)=\E_x\left[e^{- \lambda L^0_t}f(X_t)\right],
\end{align}  
and the infinitesimal generator coincides with the generator of $X$ on the space of functions $u(x)$ with $u'(0+)-u'(0-)= 2\lambda u(0)$.

It is then clear that our problem boils down to finding the appropriate process $X$.
Since \eqref{el2} expresses a one-sided condition, and recalling the expression for $\LL$ in \eqref{def:L}, we expect that in our case the process $X$ should be a \emph{reflected Brownian motion with drift $\mu$ and variance $\sigma^2$}, killed at a rate $r$. 
Hence, letting $X$ be a reflected Brownian motion with drift $\mu$ and variance $\sigma^2$, and $L^0$ its local time at zero, we are naturally led by \eqref{freeb} and \eqref{el-semg} to consider the optimal stopping problem
\begin{align}\label{U0}
U(t,x)=\sup_{0\le \tau\le T-t}\E_x\left[e^{\lambda L^0_\tau-r\tau}\right]
\end{align}
where $\lambda=2\mu/\sigma^2$. 

\begin{remark}
Note that in problem \eqref{U0} the presence of the local time corresponds to a \emph{creation} of the process (rather than killing) at 0 (cf.~\cite{P14}).
\end{remark}

We notice that the reflected Brownian motion with drift is traditionally defined in terms of analytical properties of its infinitesimal generator. However, it is known that a useful equivalence in law holds between $(X,L^0(X))$ and a more explicit process. In fact, given a standard Brownian motion $B$, setting $Y_t=-\mu t+ \sigma B_t$ and  
\[S_t=\sup_{0\leq s\leq t}Y_s,\]
it is shown in \cite{P06} that 
\begin{equation}
\label{X}
(X^x_t, L^0(X^x))\overset{\text{law}}{=}(x\vee S_t-Y_t,x\vee S_t-x).
\end{equation}
From now on we identify
\begin{align}\label{X2}
X^x_t:=x\vee S_t-Y_t\qquad{\mbox{and}}\qquad L^0_t(X^x)=x\vee S_t-x.
\end{align}
Using \eqref{X}-\eqref{X2} we can rewrite \eqref{U0} in a more tractable form as
\begin{equation}
\label{U}
U(t,x) = \sup_{0\leq\tau\leq T-t}\E_x\left[e^{\lambda L^0_\tau(X)-r\tau} \right]= \sup_{0\leq\tau\leq T-t}\E\left[e^{\lambda (x\vee S_\tau-x)-r\tau} \right],
\end{equation}
where the supremum is taken over all stopping times of $B$.

The following theorem describes the main findings of this article.

\begin{theorem}
\label{main}
{\bf (Connection)}
Let $U$ be the value function of the optimal stopping problem in \eqref{U}, and denote by $b:[0,T]\to \R$ the corresponding optimal stopping 
boundary from Proposition~\ref{propb} below. Then the value $V$ of the dividend problem satisfies
\[V(t,x)=\int_0^x U(t,y)\,dy,\]
and the dividend strategy $D^b_s=\sup_{0\leq u\leq s}(x+\mu u+\sigma B_u-b(t+u))^+$ is optimal.
\end{theorem}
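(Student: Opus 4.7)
The plan is to apply Theorem~\ref{verification} to the candidate $v(t,x) := \int_0^x U(t,y)\,dy$, taking the role of $a$ to be played by the optimal stopping boundary $b$ of Proposition~\ref{propb}. Once (i)--(v) are verified, both conclusions of Theorem~\ref{main} follow at once: the identity $V=v$ gives the integral representation, and the same theorem simultaneously identifies $D^b$ as optimal.

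I would first import from Sections~\ref{sec:osp}, \ref{sec:Ux} and \ref{sec:Ut} the regularity of $U$ needed for the verification: continuity of $U$ on $[0,T]\times[0,\infty)$, continuity of $U_x$ and $U_t$ on $[0,T)\times(0,\infty)$, and continuity of $U_{xx}$ on the continuation set, where moreover $\mathcal{L} U = 0$ classically. These transfer to the regularity demanded of $v$, namely $v\in\mathcal{C}([0,T]\times[0,\infty))\cap\mathcal{C}^{1,2}([0,T)\times(0,\infty))$, with $v_x=U$, $v_{xx}=U_x$, and $v_t(t,x) = \int_0^x U_t(t,y)\,dy$. Conditions (ii) and (iii) are then immediate: $v(t,0)=0$ by construction, and $v(T,x)=x$ because $U(T,\cdot)\equiv 1$ (at $t=T$ only $\tau=0$ is admissible). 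Condition (v) is the observation that $U\equiv 1$ on the stopping region $\{x\geq b(t)\}$, so $v_x=1$ there.

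The crux is (iv). A direct calculation, combining differentiation under the integral for $v_t$ with the fundamental theorem of calculus applied to the $\partial_y$ and $\partial_y^2$ terms in $\int_0^x \mathcal{L} U(t,y)\,dy$, yields
\[
\mathcal{L} v(t,x) \;=\; \int_0^x \mathcal{L} U(t,y)\,dy \;+\; \tfrac{\sigma^2}{2}\,U_x(t,0+) \;+\; \mu\,U(t,0+).
\]
In $\{0<x<b(t)\}$ the integrand vanishes, so $\mathcal{L} v(t,x) = 0$ reduces to the boundary identity
\[
U_x(t,0+) \;=\; -\lambda\, U(t,0+),\qquad \lambda=2\mu/\sigma^2.
\]
This is where the real content of the theorem lies and, in my view, the main obstacle. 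It is the analytic signature of the \emph{creation} (rather than killing) at $0$ built into \eqref{U} via $e^{\lambda L^0_\tau}$: the plus sign in the exponent forces the minus sign in the elastic-type condition \eqref{el2}, which is precisely what converts an absorbing boundary for the controlled process in \eqref{V} into a reflecting/creating boundary for the stopping problem. I expect to establish this identity using the representation \eqref{X2} and the semigroup relation \eqref{el-semg}, for instance by an It\^o--Tanaka argument applied to $e^{\lambda L^0_t-rt}U(t,X^x_t)$ in which the local-time term is read off to match the one-sided derivative behaviour at $0$.

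It remains to complete condition (i) by showing $\mathcal{L} v\leq 0$ in the stopping region of $U$. The inequality $v_x\geq 1$ holds globally because $U\geq 1$ (take $\tau=0$ in \eqref{U}). On $\{x\geq b(t)\}$, $U\equiv 1$ forces $v_{xx}=U_x=0$, $v_x=1$, and $v_t(t,\cdot)$ to be constant in $x$; continuity of $v_t$ across the free boundary together with $\mathcal{L} v(t,b(t))=0$ (by (iv) and continuity) then gives
\[
\mathcal{L} v(t,x) \;=\; \mathcal{L} v(t,b(t)) - r(x-b(t)) \;=\; -r(x-b(t)) \;\leq\; 0.
\]
Continuity of $b$ and $b(T)=0$ are supplied by Proposition~\ref{propb}, so all hypotheses of Theorem~\ref{verification} are met, and the conclusion of Theorem~\ref{main} follows.
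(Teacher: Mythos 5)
Your plan is correct and coincides with the paper's own proof: apply Theorem~\ref{verification} to $v(t,x)=\int_0^x U(t,y)\,dy$ with $a=b$, using the regularity from Sections~\ref{sec:osp}--\ref{sec:Ut}, reduce $\mathcal{L}v$ on $\{0<x<b(t)\}$ to the boundary term $\tfrac{\sigma^2}{2}U_x(t,0+)+\mu U(t,0+)$, and handle the stopping region via $\mathcal{L}U=-r$ there. The one point worth flagging is that the ``main obstacle'' you single out, $U_x(t,0+)=-\lambda U(t,0+)$, is not something left to be established by a separate It\^o--Tanaka argument: it is already Corollary~\ref{bc}, which drops out of the probabilistic formula \eqref{Ux} for $U_x$ evaluated at $x=0$ (where $S_{\tau^*}>0$ a.s.\ for $t<T$); once you import Section~\ref{sec:Ux} in full, there is nothing left to prove at that step.
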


The proof of Theorem~\ref{main} is given in Section \ref{sec:proof} below and it builds on the results of Sections~\ref{sec:osp}-\ref{sec:Ut} concerning $U$ and $b$. Additional properties of $V$ and $b$ may also be deduced from the study in Sections~\ref{sec:osp}-\ref{sec:Ut} and we summarise some of them as follows.

\begin{theorem}
\label{further}
{\bf (Properties of the value function and the optimal boundary)}
\begin{itemize}
\item[  (i)]
The value function $V$ belongs to $\C^{0,1}([0,T]\times[0,\infty))\cap \C^{1,2}([0,T)\times(0,\infty))$.
\item[ (ii)]
$V$ satisfies $\max\{\mathcal L V,1-V_x\}=0$ on $[0,T)\times(0,\infty)$.
\item[(iii)]
$x\to V(t,x)$ is concave, and $t\mapsto V(t,x)$ is non-increasing.
\item[ (iv)]
The boundary $b:[0,T)\to(0,\infty)$ describing the optimal dividend strategy is non-increasing, continuous and satisfies 
$b(T-)=0$.
\end{itemize}
\end{theorem}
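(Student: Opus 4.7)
The plan is to deduce Theorem~\ref{further} from Theorem~\ref{main} together with the analysis of $U$ and $b$ carried out in Sections~\ref{sec:osp}--\ref{sec:Ut}. The representation $V(t,x)=\int_0^x U(t,y)\,dy$, combined with the identification of $b$ as the optimal stopping boundary of $U$, lets each claim of Theorem~\ref{further} be read off from the corresponding property of $U$.

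For item~(i), differentiating the integral representation gives $V_x(t,x)=U(t,x)$, $V_{xx}(t,x)=U_x(t,x)$ and $V_t(t,x)=\int_0^x U_t(t,y)\,dy$. Joint continuity of $U$ on $[0,T]\times[0,\infty)$, established in Section~\ref{sec:osp}, yields $V\in\C^{0,1}([0,T]\times[0,\infty))$; the spatial regularity of $U$ in Section~\ref{sec:Ux} and the time regularity in Section~\ref{sec:Ut} then upgrade this to $V\in\C^{1,2}([0,T)\times(0,\infty))$. Item~(ii) is essentially a by-product of the proof of Theorem~\ref{main} in Section~\ref{sec:proof}: once the regularity in~(i) is available, $V$ satisfies the hypotheses of the verification Theorem~\ref{verification} with $a=b$, and the HJB equation follows. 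The identity $\LL V=0$ on the continuation region is where the elastic-type boundary condition~\eqref{el2} for $U$ at zero plays its role, producing the cancellation of the boundary terms that appear when integrating $\LL U=0$ from $0$ to $x$.

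For item~(iii), spatial concavity reduces, via $V_{xx}=U_x$, to the assertion that $U$ is non-increasing in $x$, which can be read directly from the representation~\eqref{U}: for every $\omega$ and every stopping time $\tau$, the map $x\mapsto x\vee S_\tau(\omega)-x$ is non-increasing, and this monotonicity is preserved by taking expectations and then suprema. Non-increase of $t\mapsto V(t,x)$ is immediate from the definition~\eqref{V}, since any dividend strategy admissible with horizon $T-t_2$ is trivially admissible with horizon $T-t_1$ for $t_1<t_2$.

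Finally for item~(iv), monotonicity of $b$ follows from its characterisation $b(t)=\inf\{x\ge 0:U(t,x)=1\}$ together with the non-increase of $t\mapsto U(t,x)$ (itself inherited from the shrinking set of admissible stopping times in~\eqref{U}): if $t_1<t_2$ then $U(t_1,\cdot)\ge U(t_2,\cdot)$, so the stopping region at time $t_2$ contains that at time $t_1$, whence $b(t_2)\le b(t_1)$. Continuity of $b$ is the content of Proposition~\ref{propb}, and the limit $b(T-)=0$ follows via the Russian-option connection discussed in Section~\ref{sec:remarks}. The real difficulty in this programme lies not in the assembly above but in proving the $\C^{1,2}$-regularity of $U$ and the continuity of $b$ in Sections~\ref{sec:Ux}--\ref{sec:Ut}, where the unusual creation-at-zero feature of the underlying process forces a delicate probabilistic analysis.
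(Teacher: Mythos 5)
Your assembly is essentially the one the paper has in mind: Theorem~\ref{further} is stated without a standalone proof precisely because each item is meant to be read off from the representation $V(t,x)=\int_0^x U(t,y)\,dy$ of Theorem~\ref{main}, the properties of $U$ in Proposition~\ref{propU} and Theorems~\ref{spatial-derivative}, \ref{time-derivative}, and the verification computation in Section~\ref{sec:proof}. Items~(i), (ii), (iii) are correctly traced.

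One misattribution in item~(iv): you invoke the Russian-option results of Section~\ref{sec:remarks} to get $b(T-)=0$, but this is already fully contained in Proposition~\ref{propb}(iii), which asserts continuity of $b$ on all of $[0,T]$ under the convention $b(T):=0$; the proof there treats left-continuity at $t=T$ directly (assuming $b(T-)>b(T)=0$ and deriving a contradiction). The Russian-option connection in Section~\ref{sec:remarks} gives something strictly finer, namely the rate $b(t)\sim\sigma\sqrt{(T-t)\ln\tfrac{1}{T-t}}$, and is not needed for the qualitative statement $b(T-)=0$. A second, very minor point: passing from $v(t,x)=\int_0^x U(t,y)\,dy$ to $v_t(t,x)=\int_0^x U_t(t,y)\,dy$ with $v_t$ jointly continuous implicitly uses the uniform-in-$x$ Lipschitz bound of Lemma~\ref{Lip-in-t} to dominate the difference quotients near $y=0$ (where continuity of $U_t$ is not claimed); it is worth citing that lemma explicitly, as the paper's phrase \emph{``it is immediate to see''} is leaning on it.
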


\begin{remark}
It is worth observing that we may consider a more general controlled dynamic for the value of the fund, say
\begin{align}\label{XD2}
dX^D_t=\mu(X^D_t)dt+\sigma(X^D_t)dB_t-dD_t
\end{align}
for some functions $\mu$ and $\sigma$.
Then the verification result Theorem~\ref{verification} may be stated in a similar form, but with $\LL$ being the second order operator associated to $\mu(x)$ and $\sigma(x)$, and with the process $D^a$ realising the Skorokhod downwards reflection of $X^D$ at the boundary $a(\,\cdot\,)$. In this setting, the formal derivation of \eqref{freeb} requires to replace $\LL$ by $\widetilde{\LL}:=\partial_t+\tfrac{\sigma^2}{2}\partial_{xx}+(\mu+\sigma\,\sigma')\partial_x-(r-\mu')$ and \eqref{el2} by $u_x(t,0+)=-\tfrac{2\mu(0)}{\sigma^2(0)}u(t,0+)$. 

It follows that \eqref{U0} is again the natural candidate optimal stopping problem to be linked to \eqref{V}, but now 
$L^0$ is the local time at zero of a diffusion $X$ associated to $\widetilde \LL$ and reflected at zero, $\lambda=2\mu(0)/\sigma^2(0)$
and the constant killing rate $r$ should be replaced by a level-dependent rate $r-\mu'$.
Unfortunately, however, the general version of \eqref{U0} cannot be reduced to a simpler form (as in \eqref{U}) because there seems 
to be no analogue of \eqref{X}. In Sections~\ref{sec:Ux} and \ref{sec:Ut} below we exploit the explicitness of 
\eqref{X} to derive sufficient regularity of $U$
so that the verification result Theorem~\ref{verification} can be applied to $v(t,x)=\int_0^x U(t,y)\,dy$.
While we conjecture that $U=V_x$ holds also for more general absorbed diffusion processes, a full treatment of the general case
is technically more demanding and is therefore left for future studies.
\end{remark}

\begin{remark}
One may add a discounted running cost $e^{-rt}f(t,X^D_t)$ in the formulation for \eqref{V}. This would simply give rise to an additional running cost in \eqref{U0} of the form $e^{\lambda L_t-rt}f_x(t,X_t)$.
\end{remark}

\begin{remark}\label{rem:russian}
The optimal stopping problem \eqref{U} is closely connected to the Russian option with a finite horizon, 
see \cite{DKS}, \cite{E}, or \cite{P}. In fact, 
\[ U(t,x)=e^{-\lambda x}\tilde U(t,x),\]
where
\[\tilde U(t,x):=\sup_{0\leq\tau\leq T-t}\E\left[e^{\lambda (x\vee S_\tau)-r\tau} \right]\]
is the value of a Russian option written on a stock with current price 1, a historic maximal price $e^{\lambda x}$, 
volatility $\lambda\sigma$ and drift 0.
While some parts of our analysis in Section~\ref{sec:osp} below can be deduced from studies of the Russian 
option, we choose, for the convenience of the reader, to include a detailed study. It should be noticed, however, that 
we go substantially beyond the regularity results contained in \cite{DKS}, \cite{E}, or \cite{P} by proving that indeed $U$ is 
$\C^1$ on $[0,T)\times(0,\infty)$.
\end{remark}

\section{Analysis of the optimal stopping problem}
\label{sec:osp}

Choosing $\tau=0$ in \eqref{U} gives $U(t,x)\geq 1$. 
Denote by 
\[\mathcal C:=\{(t,x)\in[0,T]\times[0,\infty):U(t,x)>1\}\]
and
\[\mathcal D:=\{(t,x)\in[0,T]\times[0,\infty):U(t,x)=1\}\]
the continuation region and the stopping region, respectively. 
We notice that assumptions in \cite[Appendix D, Theorem D.12]{KS2} are satisfied in our setting. Hence, from standard optimal stopping theory the stopping time 
\begin{eqnarray}
\label{tau}
\tau^*:=\tau^*_{t,x}&:=& \inf\{s\geq 0:U(t+s, X^x_s)=1\}\\
\notag
&=& \inf\{s\geq 0:(t+s, X^x_s)\in\mathcal D\}
\end{eqnarray}
is optimal for the problem \eqref{U} in the sense that
\[U(t,x)=E\left[e^{\lambda (x\vee S_{\tau^*}-x)-r\tau^*} \right].\]
Moreover, for any $x\in\R_+$, the process 
\begin{align}\label{eq:supmart}
e^{-rs+\lambda L^0_s(X)}U(t+s, X_{s})\,,\quad s\ge0,
\end{align}
is a $\P_x$-supermartingale, and 
\begin{align}\label{eq:mart}
e^{-rs\wedge\tau^*+\lambda L^0_t(X)}U(t+s\wedge\tau^*, X_{s\wedge\tau^*})\,,\quad s\ge0,
\end{align}
is a $\P_x$-martingale.
In Proposition~\ref{propb} below, we provide some qualitative properties of the shape of $\mathcal C$.
First, however, we list a few properties of the value function $U$.

\begin{proposition}\label{propU} 
{\bf (Properties of $U$)}
%\label{prop}
The function $U:[0,T]\times[0,\infty)\to[0,\infty)$ is 
\begin{itemize}
\item[(i)]
equal to one at all points $(T,x)$, $x\in[0,\infty)$;
\item[(ii)]
non-increasing in $t$;
\item[(iii)]
non-increasing and convex in $x$;
\item[(iv)]
continuous on $[0,T]\times[0,\infty)$.
\end{itemize}
\end{proposition}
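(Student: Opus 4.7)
Parts (i)--(iii) are essentially read off the definition. For (i), at $t=T$ the constraint $0\le\tau\le 0$ forces $\tau=0$, so $U(T,x)=\E[e^{0}]=1$. For (ii), the family of admissible stopping times $\{\tau:0\le\tau\le T-t\}$ shrinks in $t$, hence the supremum is non-increasing. For (iii), rewrite $x\vee S_\tau-x=(S_\tau-x)^+$: as a function of $x$ this is the positive part of an affine function, hence convex and non-increasing, and composition with the convex, non-decreasing exponential $y\mapsto e^{\lambda y}$ preserves both properties; expectation and supremum over $\tau$ do the same, so $x\mapsto U(t,x)$ is convex and non-increasing on $[0,\infty)$. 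Before doing any of this I would record the deterministic pathwise bound $S_\tau\le \sigma M_T$ for $\tau\le T$, where $M_T:=\sup_{0\le s\le T}|B_s|$, so that every expectation appearing below is dominated by the integrable random variable $e^{\lambda\sigma M_T}$; this guarantees both finiteness of $U$ and a common integrable majorant for the dominated convergence arguments in (iv).

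For (iv), convexity from (iii) already yields continuity of $U(t,\cdot)$ on $(0,\infty)$, so the real task is joint continuity on $[0,T]\times[0,\infty)$. Fix $(t,x)$ and a sequence $(t_n,x_n)\to(t,x)$. I would establish lower and upper semicontinuity separately. For LSC, pick any admissible $\tau\le T-t$ and set $\tau_n:=\tau\wedge(T-t_n)$, which is admissible at $(t_n,x_n)$ and satisfies $\tau_n\to\tau$ a.s. Path continuity of $S$ and dominated convergence (with the majorant above) then give $\E[e^{\lambda(S_{\tau_n}-x_n)^+-r\tau_n}]\to\E[e^{\lambda(S_\tau-x)^+-r\tau}]$; taking supremum over $\tau$ yields $\liminf_n U(t_n,x_n)\ge U(t,x)$.

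For USC, pick $\varepsilon$-optimal $\tau_n\le T-t_n$ for $U(t_n,x_n)$ and set $\tilde\tau_n:=\tau_n\wedge(T-t)$, which is admissible at $(t,x)$. Then $0\le\tau_n-\tilde\tau_n\le(t-t_n)^+\to 0$, so uniform continuity of $s\mapsto S_s$ on $[0,T]$ forces $|S_{\tau_n}-S_{\tilde\tau_n}|\to 0$ a.s.; combined with $x_n\to x$ the two exponents differ by an a.s.\ null sequence, and dominated convergence delivers $\E[e^{\lambda(S_{\tau_n}-x_n)^+-r\tau_n}]-\E[e^{\lambda(S_{\tilde\tau_n}-x)^+-r\tilde\tau_n}]\to 0$, whence $U(t_n,x_n)-U(t,x)\le\varepsilon+o(1)$. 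Sending $n\to\infty$ and then $\varepsilon\to 0$ gives $\limsup_n U(t_n,x_n)\le U(t,x)$.

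The main obstacle is the USC step, where one must simultaneously control the $t_n$-dependence of both the admissibility constraint and the sample path $S_{\tau_n}$, together with the $x_n$-dependence of the payoff. The two key ingredients are pathwise uniform continuity of $S$ on $[0,T]$ and the single integrable majorant $e^{\lambda\sigma M_T}$, both of which are immediate from the explicit representation \eqref{X2}.
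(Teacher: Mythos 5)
Your parts (i)--(iii) agree with the paper's proof essentially verbatim. For part (iv), however, you take a genuinely different route that is also correct. The paper's argument is quantitative and two--step: first it extracts from (iii) a uniform-in-$t$ Lipschitz estimate $0\le U(t,x_1)-U(t,x_2)\le \lambda(x_2-x_1)\,\E[e^{\lambda S_T}]$, reducing joint continuity to continuity of $t\mapsto U(t,x)$ for fixed $x$; it then estimates $U(t_1,x)-U(t_2,x)$ using an optimal stopping time $\tau^*_{t_1,x}$ (whose existence it has invoked from general theory), bounding on the event $\{T-t_2<\tau^*\le T-t_1\}$ by the monotonicity of $S$ and letting dominated convergence finish. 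You instead work directly with joint sequential semicontinuity: for lower semicontinuity you clip a fixed $\tau$ to $\tau\wedge(T-t_n)$, and for upper semicontinuity you take $\varepsilon$-optimal $\tau_n$ for $(t_n,x_n)$, clip them to $\tau_n\wedge(T-t)$, and exploit the deterministic bound $|\tau_n-\tilde\tau_n|\le (t-t_n)^+$ together with pathwise uniform continuity of $s\mapsto S_s$ on $[0,T]$; a single integrable majorant ($e^{\lambda\sigma M_T}$, or equivalently the paper's $e^{\lambda S_T}$) controls all the expectations. Your approach is slightly more self-contained since it only needs $\varepsilon$-optimal stopping times rather than exact optimizers, while the paper's version is more modular and produces the explicit uniform Lipschitz modulus in $x$, which is reused later. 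The upper-semicontinuity step is indeed the delicate one, and your handling of the simultaneous $t_n$- and $x_n$-dependence is sound.
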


\begin{proof}
The first property follows directly from the definition of $U$, and (ii) is obvious since the set of stopping times is decreasing in $t$.

For (iii), note that the function $e^{\lambda (x\vee S_\tau-x)-r\tau}$ is a.s. non-increasing and convex in $x$ for any fixed $\tau$.
It follows that $\E\left[e^{\lambda (x\vee S_\tau-x)-r\tau} \right]$ is non-increasing and convex, and hence also $U$.

Finally, for (iv) we let $x_2>x_1\geq 0$ and $t\in[0,T]$. Then, by (iii),
\begin{eqnarray*}
0 &\leq& U(t,x_1)-U(t,x_2)\\
&\leq& \sup_{0\leq\tau\leq T-t}\E\left[e^{-r\tau}\left(e^{\lambda (x_1\vee S_\tau-x_1)}-e^{\lambda (x_2\vee S_\tau-x_2)}\right) \right]\\
&\leq& \lambda(x_2-x_1)
\sup_{0\leq\tau\leq T-t}\E\left[e^{\lambda  S_\tau} \right]\\
&\leq&  \lambda(x_2-x_1)\E\left[e^{\lambda  S_T} \right],
\end{eqnarray*}
where the second last inequality follows from the fact that $e^{\lambda(x\vee s-x)}$ is Lipschitz continuous in $x$ with constant 
$\lambda e^{\lambda s}$. This proves that $U$ is Lipschitz continuous in $x$, uniformly in $t\in[0,T]$. Thus, to prove continuity
of $U$ it suffices to check that $t\mapsto U(t,x)$ is continuous for any fixed $x\in[0,\infty)$.
To do that, fix $x$ and let $t_1\leq t_2$. Let $\tau^*=\tau^*_{t_1,x}$ be optimal for $(t_1,x)$, and define 
$\hat\tau:=\tau^*\wedge(T-t_2)$. Then
\begin{eqnarray*}
0 &\leq& U(t_1,x)-U(t_2,x)\\
&\leq& \E\left[ e^{\lambda (x\vee S_{\tau^*}-x)-r\tau^*}-e^{\lambda (x\vee S_{\hat\tau}-x)-r\hat\tau} \right]\\
&=& \E\left[\left(e^{\lambda (x\vee S_{\tau^*}-x)-r\tau^*}-e^{\lambda (x\vee S_{T-t_2}-x)-r(T-t_2)}\right)
1_{\left\{\tau^*\in(T-t_2,T-t_1]\right\}}\right]\\
&\leq& e^{-r(T-t_2)}\E\left[e^{\lambda(x\vee S_{T-t_1}-x)}-e^{\lambda(x\vee S_{T-t_2}-x)}\right],
\end{eqnarray*}
which tends to 0 as $t_2-t_1\to 0$ by dominated convergence. This completes the proof of (iv).
\end{proof}

\begin{proposition}
\label{propb}
There exists a boundary function $b:[0,T)\to (0,\infty)$ such that 
\begin{itemize}
\item[(i)]
$\mathcal C=\{(t,x)\in[0,T)\times[0,\infty):0\leq x<b(t)\}$;
\item[(ii)]
$b$ is non-increasing;
\item[(iii)]
$b$ is continuous on $[0,T]$ if one sets $b(T):=0$.
\end{itemize}
\end{proposition}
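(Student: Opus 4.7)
The plan is to set $b(t):=\inf\{x\ge 0:U(t,x)=1\}$ (with $\inf\emptyset=+\infty$) and then verify the three items in order. Continuity and monotonicity of $x\mapsto U(t,x)$ (Proposition~\ref{propU}(iii)--(iv)) ensure that the slice $\{x:U(t,x)=1\}$ is either empty or a closed half-line $[b(t),\infty)$, which yields (i) once $b(t)\in(0,\infty)$ on $[0,T)$. For finiteness I would use $U(t,x)\le\sup_\tau\E[e^{\lambda(S_T-x)^+}]$ together with dominated convergence (dominator $e^{\lambda S_T}$, integrable by the estimate in the proof of Proposition~\ref{propU}(iv)) to obtain $U(t,x)\to 1$ as $x\to\infty$, uniformly in $t$. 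For strict positivity on $[0,T)$ I would use $L^0_\tau(X^0)=S_\tau$, so $U(t,0)=\sup_{\tau\le T-t}\E[e^{\lambda S_\tau-r\tau}]$; the convexity inequality $e^a\ge 1+a$ gives $\E[e^{\lambda S_\epsilon-r\epsilon}]\ge 1+\lambda\E[S_\epsilon]-r\epsilon$, and since $\E[S_\epsilon]\ge \sigma\sqrt{2\epsilon/\pi}-\mu\epsilon$ by the classical maximum formula for Brownian motion, the $\sqrt\epsilon$ term dominates for small $\epsilon\in(0,T-t)$ and forces $U(t,0)>1$.

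Item (ii) is immediate from Proposition~\ref{propU}(ii): if $U(t_2,x)>1$ and $t_1<t_2$, then $U(t_1,x)\ge U(t_2,x)>1$, hence $b(t_1)\ge b(t_2)$. For the right-continuity half of (iii), if $b(t_0+)<b(t_0)$ I would pick $x\in(b(t_0+),b(t_0))$: then $(t_0,x)\in\mathcal C$, whereas $(t_n,x)\in\mathcal D$ along any sequence $t_n\downarrow t_0$, and continuity of $U$ forces $U(t_0,x)=\lim_n U(t_n,x)=1$, a contradiction.

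The delicate step, and the main obstacle, is left-continuity (which will simultaneously deliver $b(T-)=0$). Suppose $b(t_0-)>b(t_0)$ at some $t_0\in(0,T]$ and pick $b(t_0)<c_1<c_2<b(t_0-)$; by monotonicity of $b$, the rectangle $R_\delta:=(t_0-\delta,t_0)\times(c_1,c_2)$ sits in $\mathcal C$ for $\delta>0$ small. On $R_\delta$ the process $X$ is bounded away from $0$, so the local-time functional is inert and standard interior parabolic regularity for optimal stopping yields $U\in \mathcal C^{1,2}(R_\delta)$ with $\mathcal L U=0$, where $\mathcal L=\partial_t+\tfrac{\sigma^2}{2}\partial_{xx}+\mu\partial_x-r$. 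I would then test this identity against a non-negative $\psi\in \mathcal C_c^\infty((c_1,c_2))$, $\psi\not\equiv 0$, and integrate by parts twice in $x$; boundary terms vanish by compact support of $\psi$, and a short computation gives
\begin{equation*}
\int_{c_1}^{c_2}\psi(x)\bigl[U(t_0-\delta,x)-U(t_0,x)\bigr]\,dx
=\int_{t_0-\delta}^{t_0}\!\!\int_{c_1}^{c_2}\!\!\Bigl[\tfrac{\sigma^2}{2}\psi''(x)-\mu\psi'(x)-r\psi(x)\Bigr]U(t,x)\,dx\,dt.
\end{equation*}
Since $U(t_0,x)=1$ on $(c_1,c_2)$ (either by Proposition~\ref{propU}(i) if $t_0=T$, or because $(t_0,x)\in\mathcal D$ if $t_0<T$) and $U(t_0-\delta,x)\ge 1$, the left-hand side is $\ge 0$. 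Dividing by $\delta$ and letting $\delta\downarrow 0$, uniform convergence $U(t,\cdot)\to 1$ on $[c_1,c_2]$ together with the vanishing moments $\int\psi'=\int\psi''=0$ forces the right-hand side to tend to $-r\int_{c_1}^{c_2}\psi(x)\,dx<0$, contradicting the sign of the left-hand side. Hence $b$ admits no downward jump, and the same argument at $t_0=T$ gives $b(T-)=0$. The crux is the combination of interior $\mathcal C^{1,2}$-regularity of $U$ in $\mathcal C$ with the strict inequality $r>0$, equivalently $\mathcal L(\mathbf 1)=-r<0$, which is precisely what rules out the jump.
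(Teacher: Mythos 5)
Your positivity, monotonicity and right-continuity steps match the paper, and the positivity estimate via $e^a\ge 1+a$ and the classical $\E[\sup_{s\le\epsilon}B_s]=\sqrt{2\epsilon/\pi}$ is a cleaner route to the same $\sqrt\epsilon$-order lower bound than the paper's reflection-principle computation. The left-continuity step is a genuinely different argument: the paper works probabilistically, using the martingale property to get $U(t_\ep,x)\le 1-r\ep+o(\ep)$ on a putative jump interval, whereas you use $\mathcal LU=0$ in the continuation region together with a weak formulation against a compactly supported test function and let the zeroth-order term $-r\int\psi$ produce the contradiction. Both exploit the same mechanism ($\mathcal L\mathbf1=-r<0$), and both are sound; the paper's is self-contained at that point of the exposition, while yours leans on interior $C^{1,2}$-regularity (stated in the paper only in Proposition~\ref{fbp}, but this is not a true circularity since interior regularity needs only openness of $\C$, i.e.\ continuity of $U$).

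The genuine gap is in the finiteness of $b$. You show $U(t,x)\to 1$ as $x\to\infty$ by sandwiching between $1$ and $\E[e^{\lambda(S_T-x)^+}]$ and applying dominated convergence. This is correct but does not prove that $U(t,x)=1$ for any finite $x$: nothing in that bound rules out $U(t,x)>1$ strictly for all $x$, with $U$ decreasing to $1$ only asymptotically, which would mean $b(t)=+\infty$. The missing ingredient is a strictly-below-$1$ upper bound, and for that you need a positive lower bound on $\tau^*$. The paper obtains it by first proving monotonicity of $b$ and observing that if $b(t)=\infty$ for some $t>0$ then $b\equiv\infty$ on $[0,t]$, so $\tau^*_{0,x}\ge t$ for every $x$; then $U(0,x)\le e^{-rt}\,\E[e^{\lambda(x\vee S_T-x)}]\to e^{-rt}<1$, contradicting $U\ge1$. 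The factor $e^{-rt}<1$ coming from the forced delay is essential; your dominated convergence estimate alone cannot produce it. To close the gap you should reorder your steps (prove monotonicity of $b$ before finiteness) and then run this argument, or find an alternative that quantifies the discount incurred when the stopping region is avoided.
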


\begin{remark}
It follows that the optimal stopping time from \eqref{tau} satisfies
\[\tau^*_{t,x}=\inf\{s\geq 0:X^x_s\geq b(t+s)\}.\] 
The function $b$ is therefore referred to as {\bf the optimal stopping boundary}.
\end{remark}

\begin{proof}
The existence of a function $b:[0,T)\to[0,\infty]$ satisfying (i) is obvious from the fact that $x\mapsto U(t,x)$ is non-increasing.
To prove that $b>0$ on $[0,T)$, it suffices to check that $U(t,0)>1$ for $t\in[0,T)$. Choosing $\tau=\ep\leq T-t$ yields
\[
U(t,0) \geq \E\left[e^{\lambda  S_\ep-r\ep} \right]
\geq e^{-(r+\frac{2\mu^2}{\sigma^2})\ep}\E\left[e^{\sup_{0\leq s\leq \ep}\frac{2\mu}{\sigma}B_s}\right].
\]
By the reflection principle, 
\[\mathbb P(\sup_{0\leq s\leq \ep}B_s\in dz)=2\mathbb P(B_\ep\in dz)=\sqrt{\frac{2}{\pi \ep}}e^{-\frac{z^2}{2\ep}}dz\]
for $z\geq 0$, so
\begin{eqnarray*}
\E\left[e^{\sup_{0\leq s\leq \ep}\frac{2\mu}{\sigma}B_s}\right] &=& \sqrt{\frac{2}{\pi \ep}}\int_0^\infty e^{\frac{2\mu z}{\sigma}}e^{-\frac{z^2}{2\ep}}\,dz
= \sqrt{\frac{2}{\pi}}e^{\frac{2\mu^2\ep}{\sigma^2}}\int^{\infty}_{-\frac{2\mu\sqrt \ep}{\sigma}} e^{-z^2/2}dz\\
&\geq& e^{\frac{2\mu^2\ep}{\sigma^2}}+\frac{2\mu}{\sigma}\sqrt{\frac{2\ep}{\pi}}.
\end{eqnarray*}
Consequently,
\[U(t,0)\geq 1+\frac{2\mu}{\sigma}\sqrt{\frac{2\ep}{\pi}}+ \mathcal O (\ep)\]
as $\ep\downarrow 0$. This proves that $U(t,0)>1$, so $b(t)>0$ for $t\in[0,T)$.

Next, note that (ii) is immediate from (ii) of Proposition~\ref{propU}.
To prove that $b(t)<\infty$ we assume, to reach a contradiction, that $b(t)=\infty$ for some $t\in(0,T)$
(by time-homogeneity of the model, the assumption that $t>0$ is without loss of generality). Then, since $b$ is non-increasing, 
$b(s)=\infty$ for all $s\in[0,t]$, so given a starting point $(0,x)$, the optimal stopping time $\tau^*=\tau^*_{0,x}$ in \eqref{tau} satisfies $\tau^*\geq t$.
However, 
\begin{eqnarray*}
\E\left[e^{\lambda (x\vee S_{\tau^*}-x)-r\tau^*} \right] &\leq& e^{-rt}\E\left[e^{\lambda (x\vee S_{T}-x)} \right]
%\\
%&\leq& e^{-rt}\E\left[e^{\lambda S_{T}}1_{\{S_T> x\}} \right] + \mathbb P\left(S_T\leq x\right)\\
\to e^{-rt}<1
\end{eqnarray*}
as $x\to\infty$ by dominated convergence, since $e^{\lambda (x\vee S_{T}-x)}\le e^{\lambda S_T}\in L^1(\P)$.
The latter inequality contradicts the optimality of $\tau^*$. Thus $b(t)<\infty$ for all times $t\in[0,T)$.

From the continuity of $U$, the function $b$ is lower semi-continuous, and thus (ii) implies that it is right-continuous.
To finish the proof of (iii) it thus suffices to prove left-continuity on $(0,T]$. For this, assume (to reach a contradiction) that $b(t-)>b(t)$ for some
$t\in(0,T]$. For $\ep\in(0,t)$, consider the starting point $(t_\ep,x)$, where $t_\ep=t-\ep$ and $x=(b(t-)+b(t))/2$, and 
define 
\[\gamma_\ep:=\inf\{s\geq 0:X^x_s\notin (b(t),b(t-))\}\wedge\ep.\]
Notice that $\P_{t_\ep,x}(L^0_{\gamma_\ep}(X)=0)=1$, so that $x\vee S_{\gamma_\ep}-x=0$ almost surely as well.
Moreover $\P_{t_\ep,x}(\gamma_\ep\leq\tau^*)=1$, and therefore the martingale property \eqref{eq:mart} yields
\begin{eqnarray*}
U(t_\ep,x) &=& \E\left[e^{\lambda(x\vee S_{\gamma_\ep}-x)-r\gamma_\ep}U(t_\ep+\gamma_\ep, X^x_{\gamma_\ep})\right]\\
&\leq& \E\left[e^{-r\ep}1_{\{\gamma_\ep=\ep\}}\right]+ \E\left[U(t_\ep+\gamma_\ep, X^x_{\gamma_\ep})1_{\{\gamma_\ep<\ep\}}\right]\\
&\leq& e^{-r\ep}+ U(0,0)\P\left(\gamma_\ep<\ep\right)\\
&=& 1-r\ep +o(\ep)
\end{eqnarray*}
as $\ep\to 0$, which contradicts $U\geq 1$. Thus $b(t-)=b(t)$, which completes the proof of (iii). 
\end{proof}

We end this section by stating that the value function $U$ is a classical solution to 
a parabolic equation below the boundary. The proof of this fact is standard (e.g., see \cite[Theorem~2.7.7]{KS2})
and is therefore left out.

\begin{proposition}
\label{fbp}
The value function $U$ belongs to $\C^{1,2}$ separately in the interior of the continuation set $\C$ and in the interior of the stopping set $\D$. Moreover it satisfies 
\begin{align}
\label{freeb1} &\mathcal L U(t,x)=0 & \text{for $0<x<b(t)$ and $t\in[0,T)$,}\\
\label{freeb2} &\mathcal L U(t,x)=-r & \text{for $x>b(t)$ and $t\in[0,T)$.}
\end{align}
\end{proposition}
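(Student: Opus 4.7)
The argument splits into the two regions. On the interior of $\mathcal{D}$, the function $U$ is identically equal to $1$, hence trivially of class $\mathcal{C}^{1,2}$ and satisfies $\mathcal{L}U=-r$, which gives \eqref{freeb2} at once. All the real work is in the continuation set, and the plan is the standard reduction to an interior Dirichlet problem combined with a martingale identification.

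Fix $(t_0,x_0)$ in the interior of $\mathcal{C}$ and choose an open parabolic rectangle $R=(t_1,t_2)\times(x_1,x_2)$ with $\overline{R}\subset\mathcal{C}$, $0<x_1<x_0<x_2$ and $t_1<t_0<t_2<T$. Because $U$ is continuous on $\overline{R}$ (Proposition~\ref{propU}(iv)) and $\mathcal{L}$ has constant coefficients and is uniformly parabolic, classical interior Schauder theory (see the references underlying \cite[Thm.~2.7.7]{KS2}) produces a unique classical solution $w\in\mathcal{C}^{1,2}(R)\cap\mathcal{C}(\overline{R})$ of
\[\mathcal{L}w=0\ \text{in}\ R,\qquad w=U\ \text{on}\ \partial_pR,\]
where $\partial_pR$ denotes the parabolic boundary of $R$. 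It then remains to show $U=w$ on $R$, from which $\mathcal{C}^{1,2}$-regularity of $U$ and \eqref{freeb1} follow at the arbitrary point $(t_0,x_0)$.

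For the identification, pick $(t,x)\in R$ and set $\tau_R=\inf\{s\ge 0:(t+s,X^x_s)\notin R\}$. Since $x_1>0$, the process $X^x$ remains strictly positive on $[0,\tau_R]$, so by the explicit representation \eqref{X2} its local time stays null, $L^0_{\tau_R}(X^x)=0$, and on this interval $X^x$ solves $dX^x_s=\mu\,ds-\sigma\,dB_s$, i.e.\ behaves like a Brownian motion with drift $\mu$ and variance $\sigma^2$. Applying It\^o's formula to the bounded $\mathcal{C}^{1,2}$ function $e^{-rs}w(t+s,\cdot)$ and using $\mathcal{L}w=0$ gives
\[w(t,x)=\E\bigl[e^{-r\tau_R}w(t+\tau_R,X^x_{\tau_R})\bigr]=\E\bigl[e^{-r\tau_R}U(t+\tau_R,X^x_{\tau_R})\bigr],\]
the last equality being the boundary condition $w=U$ on $\partial_pR$. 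Conversely, $\tau_R\le\tau^*_{t,x}$ because $\overline{R}\subset\mathcal{C}$, so the martingale property \eqref{eq:mart} stopped at $\tau_R$, combined again with $L^0_{\tau_R}(X^x)=0$, yields
\[U(t,x)=\E\bigl[e^{-r\tau_R}U(t+\tau_R,X^x_{\tau_R})\bigr].\]
Comparing the two expressions shows $U=w$ on $R$.

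The only non-trivial ingredient is the existence and interior regularity of $w$, which is classical parabolic theory applied to an operator with constant coefficients on a rectangle bounded away from $\{x=0\}$; everything else is a routine localization plus martingale identification, which is why the proof is omitted in the paper.
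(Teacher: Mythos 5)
Your argument is correct and is precisely the standard proof that the paper declines to reproduce, deferring instead to \cite[Theorem~2.7.7]{KS2}, whose proof follows the same pattern: solve a local Dirichlet problem $\mathcal{L}w=0$ on a rectangle compactly contained in $\mathcal{C}$ with boundary data $U$, then identify $w$ with $U$ via a Feynman--Kac/optional-sampling representation at the exit time $\tau_R$, using that $L^0$ does not charge $[0,\tau_R]$ when the rectangle is bounded away from $\{x=0\}$. One small technical point worth noting: $w$ is $\mathcal{C}^{1,2}$ only in the open rectangle $R$ and merely continuous on $\overline{R}$, so It\^o's formula should be applied up to the exit time of a slightly smaller rectangle $R'\subset\subset R$ and the identity $w(t,x)=\E[e^{-r\tau_{R'}}w(t+\tau_{R'},X^x_{\tau_{R'}})]$ then passed to the limit $R'\uparrow R$ using continuity of $w$ on $\overline{R}$ and boundedness of $\tau_R$; this is routine and does not affect the conclusion.
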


\section{Further regularity of $U$: the spatial derivative}\label{sec:Ux}

In this section we prove that the value function $U$ is continuously differentiable in the spatial variable,
see Theorem~\ref{spatial-derivative}.

\begin{lemma}
\label{eq-tau}
Let  
\[\tau'_{t,x}:=\inf\{s\geq 0:X^x_s>b(t+s)\}\wedge (T-t).\]
Then $\tau'_{t,x}=\tau^*_{t,x}$ a.s.
\end{lemma}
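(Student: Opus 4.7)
The inequality $\tau^*_{t,x} \le \tau'_{t,x}$ is immediate, because the set where $X^x_s > b(t+s)$ is contained in the set where $U(t+s,X^x_s) = 1$ (by Proposition~\ref{propb}(i)). So the whole content of the lemma is the reverse inequality, which I would prove by a local-in-time argument at $\tau^*$. On $\{\tau^*_{t,x} = T-t\}$ both stopping times equal $T-t$, so it suffices to work on $\{\tau^*_{t,x} < T-t\}$.

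On that event, continuity of the paths of $X^x$ and of $b$, combined with the characterisation $\mathcal C = \{x < b(t)\}$, yields $X^x_{\tau^*} = b(t+\tau^*)$, and since $b(t+\tau^*)>0$ (Proposition~\ref{propb}), continuity of $X^x$ gives a (random) $\delta_0>0$ with $X^x_s > 0$ for all $s\in[\tau^*,\tau^*+\delta_0]$. On this interval the representation \eqref{X2} simplifies: $x\vee S$ is constant, because $S$ can only increase when $Y=S$, i.e.\ when $X^x = 0$. Therefore
\begin{equation*}
X^x_{\tau^*+\delta} - X^x_{\tau^*} = -(Y_{\tau^*+\delta}-Y_{\tau^*}) = \mu\delta - \sigma(B_{\tau^*+\delta}-B_{\tau^*}),\qquad 0\le\delta\le\delta_0.
\end{equation*}

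Combining with the monotonicity of $b$ (Proposition~\ref{propb}(ii)),
\begin{equation*}
X^x_{\tau^*+\delta} - b(t+\tau^*+\delta) \ge \bigl(X^x_{\tau^*+\delta}-X^x_{\tau^*}\bigr) + \bigl(b(t+\tau^*)-b(t+\tau^*+\delta)\bigr) \ge \mu\delta-\sigma(B_{\tau^*+\delta}-B_{\tau^*}).
\end{equation*}
By the strong Markov property at $\tau^*$, the process $\widetilde B_\delta := B_{\tau^*+\delta}-B_{\tau^*}$ is a standard Brownian motion independent of $\mathcal F_{\tau^*}$. Brownian oscillation at the origin (or Blumenthal's zero–one law) gives, almost surely, a sequence $\delta_n\downarrow 0$ with $\widetilde B_{\delta_n} < 0$, and in particular $\mu\delta_n - \sigma \widetilde B_{\delta_n} > 0$. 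For each such $\delta_n \le \delta_0$ we obtain $X^x_{\tau^*+\delta_n} > b(t+\tau^*+\delta_n)$, so $\tau' \le \tau^* + \delta_n$. Letting $n\to\infty$ yields $\tau'_{t,x}\le \tau^*_{t,x}$ almost surely.

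The only mildly delicate step is (b), the claim that $x\vee S$ is frozen in a neighbourhood of $\tau^*$: this is what allows one to forget the reflection structure and reduce matters to the oscillation of Brownian motion. It works thanks to Proposition~\ref{propb} which guarantees $b>0$ on $[0,T)$, so that $X^x_{\tau^*}=b(t+\tau^*)$ is strictly positive and continuity keeps $X^x$ away from $0$ for a short time; after that, the standard free-boundary argument applies.
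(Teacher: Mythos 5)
Your proof is correct and takes a genuinely different, arguably more direct, route than the paper. The paper follows the scheme of \cite[Lemma~6.2]{EJ}: it introduces $Z_{s_1,s_2}=\sup_{s_1\le u\le s_2}(X^x_u-b(t+u))$ for fixed rationals $0<s_1<s_2\le T-t$, shows $\P(Z_{s_1,s_2}=0)=0$ by conditioning on the first time $\tau_1\ge s_1$ that $X^x$ hits the boundary and invoking \eqref{tau'} via the strong Markov property, then integrates over rational intervals to conclude $\P(s_1\le\tau^*\le s_2,\,\tau'>s_2)=0$. You instead apply the strong Markov property directly at $\tau^*$ itself: once $X^x_{\tau^*}=b(t+\tau^*)>0$, the process $x\vee S$ is frozen for a short while, so $X^x$ evolves as $-Y$ and is a Brownian motion with drift $\mu$; combined with the non-increase of $b$, Brownian oscillation forces an immediate strict crossing. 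Both arguments share the same kernel (that a Brownian path with monotone boundary immediately crosses after a touch), but yours exploits the explicit representation \eqref{X2} to shortcut the rational-interval decomposition. The paper's decomposition approach is the one that transfers to settings without such an explicit process structure (which is precisely the setting of the cited reference), whereas your version is cleaner in the present context.

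One small presentational omission: the identity $X^x_{\tau^*}=b(t+\tau^*)$ on $\{\tau^*<T-t\}$ fails when $x>b(t)$, since then $\tau^*=0$ and $X^x_{\tau^*}=x>b(t)$. That case should be dispensed with at the outset (as the paper does) by noting that $\tau^*=\tau'=0$ there; your argument then applies verbatim for $x\le b(t)$.
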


\begin{remark}
The proof of this follows the proof of \cite[Lemma 6.2]{EJ}.
\end{remark}

\begin{proof}
The claim is trivial for $(t,x)$ such that $x>b(t)$ so we fix $(t,x)\in[0,T)\times(0,\infty)$ with $x\le b(t)$. Since 
\[\tau^*:=\tau^*_{t,x}=\inf\{s\geq 0:X^x_s\geq b(t+s)\}\wedge (T-t),\]
we have $\tau':=\tau'_{t,x}\geq \tau^*$. Moreover we notice that 
\begin{equation}
\label{tau'}
\tau'_{t,b(t)}=0\qquad\text{$\P$-a.s.}
\end{equation}
due to the monotonicity of $b$ and well-known properties of Brownian motion.

To prove also that $\tau'_{t,x}\leq \tau^*$ we introduce $Z_{s,s'}=\sup_{s\leq u\leq s'}(X^x_u-b(t+u))$ for any $0<s<s'\leq T-t$.
We claim that for arbitrary but fixed $0<s_1<s_2\leq T-t$ one has
\begin{equation}
\label{s1s2}
\P(Z_{s_1,s_2}=0)=0.
\end{equation}
Let $\tau_{1}:=\inf\{u\geq s_1:X^x_u=b(t+u)\}\wedge (T-t)$.
Then
\begin{align}\label{Z1}
\P(Z_{s_1,s_2}=0) = \P(Z_{s_1,s_2}=0, \tau_{1}\in[s_1,s_2))+\P(Z_{s_1,s_2}=0, \tau_{1}=s_2)
\end{align}
because on the event $\{Z_{s_1,s_2}=0\}$ it must be $\tau_1\in[s_1,s_2]$.

For the first term of the expression on the right-hand side of \eqref{Z1} we have by continuity of $X$ and $b$ that
\begin{align*}
\P(&Z_{s_1,s_2}=0, \tau_{1}\in[s_1,s_2))\\
=&\P(Z_{s_1,s_2}=0, \tau_{1}\in[s_1,s_2), X^x_{\tau_1}=b(t+\tau_1))\\
=&\E\left[1_{\{\tau_{1}\in[s_1,s_2),\, X^x_{\tau_1}=b(t+\tau_1)\}}\,\P(Z_{s_1,s_2}=0|\cF_{\tau_1})\right]\\
=&\E\left[1_{\{\tau_{1}\in[s_1,s_2),\, X^x_{\tau_1}=b(t+\tau_1)\}}\,\P(Z_{s_1,\tau_1}\vee Z_{\tau_1,s_2}=0|\cF_{\tau_1})\right]\\
=&\E\left[1_{\{\tau_{1}\in[s_1,s_2),\, X^x_{\tau_1}=b(t+\tau_1)\}}\,\P(Z_{\tau_1,s_2}=0|X^x_{\tau_1}=b(t+\tau_1))\right]=0
\end{align*}
where we have used the strong Markov property, the fact that $Z_{s_1,\tau_1}1_{\{\tau_1<s_2\}}=0$ 
and that $\P(Z_{\tau_1,s_2}=0|X^x_{\tau_1}=b(t+\tau_1))=0$ due to \eqref{tau'}.

For the second term on the right-hand side of \eqref{Z1} we simply have
\begin{align*}
\P(Z_{s_1,s_2}=0, \tau_{1}=s_2)\le\P(X^x_{s_2}=b(t+s_2))=0
\end{align*}
since $X^x_{s_2}$ has a continuous distribution,
which finishes the proof of \eqref{s1s2}.

Now, if $\tau^*\in[s_1,s_2]$, then $Z_{s_1,s_2}\geq X^x_{\tau^*}-b(t+\tau^*)\geq 0$, so 
\begin{eqnarray*}
\P(\tau^*\in[s_1,s_2]) &=& \P(\tau^*\in[s_1,s_2], Z_{s_1,s_2}\geq 0)\\
&=& \P(\tau^*\in[s_1,s_2], Z_{s_1,s_2}> 0)\\
&=&  \P(\tau^*\in[s_1,s_2], \tau'\in[s_1,s_2]).
\end{eqnarray*}
Consequently, 
\[\P_{t,x}(s_1\leq \tau^*\leq s_2\,,\,\tau'>s_2)=0,\]
and since this holds for any rational $s_1$ and $s_2$, we have $\tau^*=\tau'$ a.s.
\end{proof}

\begin{proposition}
\label{cont-tau}
The optimal stopping time $\tau_{t,x}^*$ is continuous in $(t,x)\in[0,T]\times[0,\infty)$.
\end{proposition}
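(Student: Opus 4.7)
The plan is to show that for any sequence $(t_n, x_n) \to (t, x)$ in $[0, T] \times [0, \infty)$, the random variables $\tau_n := \tau^*_{t_n, x_n}$ converge to $\tau^* := \tau^*_{t, x}$ almost surely. The ingredients are: the pathwise identity $X^{x}_s = x \vee S_s - Y_s$, which makes $x \mapsto X^x_\cdot$ Lipschitz uniformly on $[0, T]$ (hence $X^{x_n}_\cdot \to X^x_\cdot$ uniformly on $[0,T]$ pathwise); continuity of $b$ on $[0, T]$ from Proposition~\ref{propb}(iii); and the identification $\tau^*_{t,x} = \tau'_{t,x}$ a.s.\ from Lemma~\ref{eq-tau}.

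For lower semi-continuity, fix any $s \in [0, \tau^*)$. By the definition of $\tau^*$ as a hitting time of a closed set we have $X^x_u < b(t + u)$ for every $u \in [0, s]$, so continuity of $X^x$ and $b$ together with compactness of $[0, s]$ yield $\delta > 0$ with $X^x_u - b(t + u) \leq -\delta$ on $[0, s]$. Uniform convergence of $X^{x_n}_\cdot$ to $X^x_\cdot$ and uniform continuity of $b$ preserve this strict inequality (with $\delta/2$, say) for $X^{x_n}$ and $b(t_n + \cdot)$ once $n$ is large enough, so $\tau_n > s$. Letting $s \uparrow \tau^*$ gives $\liminf_n \tau_n \geq \tau^*$ a.s.

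For upper semi-continuity, if $\tau^* = T - t$ then the bound $\tau_n \leq T - t_n \to T - t$ is immediate. Otherwise I would invoke Lemma~\ref{eq-tau} to work with $\tau'_{t,x} = \tau^*$, the first \emph{strict} crossing of $b(t + \cdot)$ by $X^x$. By the definition of $\tau'$ as an infimum, for every $\epsilon > 0$ there exists $s \in (\tau^*, \tau^* + \epsilon)$ with $X^x_s > b(t + s)$ strictly. The same convergence inputs as above then give $X^{x_n}_s > b(t_n + s)$ and $s \leq T - t_n$ for all large $n$, whence $\tau_n \leq s < \tau^* + \epsilon$, and therefore $\limsup_n \tau_n \leq \tau^*$ a.s.

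The main obstacle is precisely the upper semi-continuity step: hitting times of closed sets are in general only lower semi-continuous in the starting point, and a tangential touch of the boundary would break the argument. Lemma~\ref{eq-tau} is what rules out tangential touches and guarantees that $X^x$ strictly crosses $b(t + \cdot)$ on arbitrarily short intervals past $\tau^*$. Everything else is routine continuity of sample paths in the initial datum combined with continuity of $b$.
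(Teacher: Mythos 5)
Your proposal is correct and follows essentially the same approach as the paper: lower semi-continuity via pathwise Lipschitz continuity of $x\mapsto X^x$ (from the identity $X^x_s=x\vee S_s-Y_s$) together with continuity of $b$, and upper semi-continuity via the identification $\tau^*_{t,x}=\tau'_{t,x}$ of Lemma~\ref{eq-tau}, which ensures a strict crossing arbitrarily soon after $\tau^*$. The only cosmetic difference is that you separate out the boundary case $\tau^*=T-t$ explicitly, which the paper leaves implicit.
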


\begin{proof}
First notice that $x\mapsto X^x_t$ is a.s.~Lipschitz continuous in $x$ from \eqref{X2}, uniformly in $t\in[0,T]$.
Fix $(t,x)\in[0,T]\times[0,\infty)$ and $\omega$, and take a sequence $[0,T]\times[0,\infty)\ni(t_n,x_n)\to (t,x)$ as $n\to\infty$.

If $X^x_s>b(t+s)$ for some $s\in[0,T-t)$, i.e.~$\tau'_{t,x}\le s$, then continuity implies
\[X^{x_n}_{s}>b(t_n+s)\]
for $n$ large, so $\tau'_{t_n,x_n}\leq s$ for any such $n$. Therefore $\limsup_{n}\tau'_{t_n,x_n}\le \tau'_{t,x}$, since $s$ was arbitrary and  \begin{equation}
\label{taulimsup}
\limsup_{n\to\infty}\tau^*_{t_n,x_n}=\limsup_{n\to\infty}\tau'_{t_n,x_n}\leq \tau'_{t,x}=\tau^*_{t,x}.
\end{equation}

Next, if $s\in [0,T-t)$ is such that $X^x_u<b(t+u)$ for all $u\in[0,s]$, then $\inf_{u\in[0,s]} b(t+u)-X_u^x=:\delta>0$
by continuity of $b$ and $X^x$ in time. By Lipschitz continuity of $x\mapsto X^x_u$, 
$\inf_{u\in[0,s]} b(t+u)-X_u^{x_n}\geq\delta/2$ for 
$n$ large enough. By continuity (in time), this implies that $\inf_{u\in[0,s]} b(t_n+u)-X_u^{x_n}>0$ for large $n$.
Consequently, $\tau^*_{t_n,x_n}\geq s$, so 
\[\liminf_{n\to\infty}\tau^*_{t_n,x_n}\geq \tau^*_{t,x}\]
since $s$ was arbitrary.
Together with \eqref{taulimsup}, this yields $\lim_{n\to\infty}\tau^*_{t_n,x_n}=\tau^*_{t,x}$.
\end{proof}

\begin{theorem}
\label{spatial-derivative}
The spatial derivative $U_x(t,x)$ exists at all points $(t,x)\in[0,T]\times[0,\infty)$ and is continuous on $[0,T)\times[0,\infty)$.
Moreover, it satisfies
\begin{equation}
\label{Ux}
U_x(t,x)=-\lambda\E\left[ 1_{\{S_{\tau^*}> x\}}e^{\lambda(S_{\tau^*}-x)-r\tau^*}\right].
\end{equation}
\end{theorem}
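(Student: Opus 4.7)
The plan is to run an envelope argument on
\[
U(t,x)=\sup_{0\le\tau\le T-t}\E\bigl[g(x,S_\tau,\tau)\bigr],\qquad g(x,s,\tau):=e^{\lambda(s-x)^+-r\tau},
\]
combined with the continuity of $(t,x)\mapsto\tau^*_{t,x}$ from Proposition~\ref{cont-tau}. The key analytic input is the integral representation
\[
g(x+h,s,\tau)-g(x,s,\tau)=-\lambda\int_x^{x+h}1_{\{s>y\}}e^{\lambda(s-y)-r\tau}\,dy,
\]
whose integrand is dominated in modulus by $\lambda e^{\lambda s}\le\lambda e^{\lambda S_T}\in L^1(\P)$ (sub-Gaussian tails of $S_T$).

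Writing $\tau^*:=\tau^*_{t,x}$ and $\tau^{*,h}:=\tau^*_{t,x+h}$, sub/super-optimality yields, for $h>0$, the two-sided estimate
\[
\E\!\left[\tfrac{g(x+h,S_{\tau^*},\tau^*)-g(x,S_{\tau^*},\tau^*)}{h}\right]\le\tfrac{U(t,x+h)-U(t,x)}{h}\le\E\!\left[\tfrac{g(x+h,S_{\tau^{*,h}},\tau^{*,h})-g(x,S_{\tau^{*,h}},\tau^{*,h})}{h}\right].
\]
Plugging in the integral representation, the lower side converges by dominated convergence to $L:=-\lambda\E[1_{\{S_{\tau^*}>x\}}e^{\lambda(S_{\tau^*}-x)-r\tau^*}]$. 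For the upper side I would apply Fatou to the non-negative quantity obtained by negating the difference quotient, together with $\tau^{*,h}\to\tau^*$ a.s.\ and hence $S_{\tau^{*,h}}\to S_{\tau^*}$: on $\{S_{\tau^*}>x\}$ continuity forces $S_{\tau^{*,h}}>x+h$ eventually, so the averaged integrand tends to $\lambda e^{\lambda(S_{\tau^*}-x)-r\tau^*}$; on $\{S_{\tau^*}<x\}$ it vanishes; on $\{S_{\tau^*}=x\}$ its liminf is merely non-negative. Fatou then gives $\limsup_{h\downarrow0}(\text{upper side})\le L$, closing the sandwich. Running the analogue for $h<0$ (with the sandwich reversed) yields the matching left-derivative, so $U_x(t,x)$ exists and satisfies \eqref{Ux}. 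Continuity of $U_x$ on $[0,T)\times[0,\infty)$ then follows from \eqref{Ux} by the same ingredients applied along a sequence $(t_n,x_n)\to(t,x)$: Proposition~\ref{cont-tau} gives $\tau^*_{t_n,x_n}\to\tau^*_{t,x}$, and dominated convergence together with the integral-averaging device absorbs any atom of $S_{\tau^*_{t,x}}$ at the point~$x$.

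The main technical obstacle is the non-differentiability of $x\mapsto g(x,s,\tau)$ at $s=x$, which precludes a naive dominated-convergence argument on a pointwise derivative. The integral representation sidesteps this by averaging the singularity in $y$, and paired with the continuity of $\tau^*_{t,x}$ it is exactly what makes the Fatou step go through without any a priori control on the law of $S_{\tau^*}$ at~$x$.
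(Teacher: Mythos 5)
Your sandwich argument is sound in its broad strokes and is indeed close in spirit to the paper's, but it misses a genuinely necessary ingredient: the fact, proved in the paper as equation~\eqref{claim}, that $\P\bigl(S_{\tau^*_{t,x}}=x\bigr)=0$. Consider the $h<0$ half of your sandwich (writing $k=-h>0$). The side of the sandwich with $\tau^*=\tau^*_{t,x}$ fixed is now the \emph{upper} bound, and dominated convergence applied to
\[
-\frac{\lambda}{k}\int_{x-k}^x\Ind_{\{S_{\tau^*}>y\}}e^{\lambda(S_{\tau^*}-y)-r\tau^*}\,dy
\]
gives the a.s.\ limit $-\lambda\,\Ind_{\{S_{\tau^*}\ge x\}}e^{\lambda(S_{\tau^*}-x)-r\tau^*}$ (on $\{S_{\tau^*}=x\}$ the average over $[x-k,x]$ tends to $e^{-r\tau^*}$, \emph{not} to $0$ as in the $h>0$ case). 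Hence the upper side converges to $L':=-\lambda\E\bigl[\Ind_{\{S_{\tau^*}\ge x\}}e^{\lambda(S_{\tau^*}-x)-r\tau^*}\bigr]$, and the Fatou step on the $\tau^{*,h}$ side gives $\liminf\ge L'$ as well, so your argument produces a left derivative equal to $L'$, not $L$. Since $L-L'=\lambda\E\bigl[\Ind_{\{S_{\tau^*}=x\}}e^{-r\tau^*}\bigr]$, the left and right derivatives match precisely when $\P(S_{\tau^*}=x)=0$, which you have not established. The same issue reappears in the continuity step: once the derivative is taken, $U_x(t_n,x_n)=-\lambda\E\bigl[\Ind_{\{S_{\tau^*_{t_n,x_n}}>x_n\}}e^{\lambda(S_{\tau^*_{t_n,x_n}}-x_n)-r\tau^*_{t_n,x_n}}\bigr]$ has no averaging left, so the indicator $\Ind_{\{S_{\tau^*_{t_n,x_n}}>x_n\}}$ need not converge a.s.\ on the event $\{S_{\tau^*_{t,x}}=x\}$, and ``the integral-averaging device'' does not rescue dominated convergence there.

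The paper closes this gap by noting that on $\{S_{\tau^*_{t,x}}=x\}$ the stopping time $\tau^*$ coincides with $\hat\tau:=\inf\{s:S_s=x\}\wedge(T-t)$, which is the first hitting time of $0$ by $X^x$; since $b>0$ on $[0,T)$, that forces $\tau^*=T-t$, whence $\P(S_{\tau^*}=x)\le\P(X^x_{T-t}=0)=0$. You need to supply this (or an equivalent) argument. Also worth noting: the paper avoids your Fatou step entirely by first invoking the convexity of $x\mapsto U(t,x)$ to guarantee that $U_x(t,\cdot)$ exists outside a countable set, proving the formula \eqref{Ux} only at those points, and then using one-sided continuity of one-sided derivatives of convex functions together with continuity of the right-hand side of \eqref{Ux} to extend. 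Your direct sandwich at every point is an acceptable alternative route, but it does not sidestep the need for $\P(S_{\tau^*}=x)=0$; it needs it just as much, and a bit earlier.
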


\begin{proof}
We first show that the function 
\[g(t,x):=-\lambda\E\left[ 1_{\{S_{\tau^*}> x\}}e^{\lambda(S_{\tau^*}-x)-r\tau^*}\right]\]
is continuous on $[0,T)\times[0,\infty)$. To do that, 
assume that $[0,T)\times[0,\infty)\ni (t_n,x_n)\to(t,x)\in [0,T)\times[0,\infty)$ as $n\to\infty$. By Proposition~\ref{cont-tau},
$\tau_{t_n,x_n}^*\to \tau_{t,x}^*$ a.s. Moreover, notice that 
\begin{equation}
\label{claim}
\P(S_{\tau^*_{t,x}}=x)=0.
\end{equation} 
Indeed, define $\hat\tau_{t,x}:=\inf\{s\geq 0:S_s=x\}\wedge (T-t)$, so that $\hat\tau_{t,x}$ is also the first time that $X^x$ equals zero. 
Since $s\mapsto S_s$ is increasing and $\P(S_{\hat\tau_{t,x}+u}>x)=1$ for all $u>0$, we have that $\P(S_{\tau^*_{t,x}}=x)=\P(\tau^*_{t,x}=\hat\tau_{t,x})$. However, since $b(t+s)>0$ for $s\in[0,T-t)$ and $X^x_{\hat\tau_{t,x}}=0$,  
\[\P(\tau^*_{t,x}=\hat\tau_{t,x})=\P(\tau^*_{t,x}=T-t)\leq \P(X^x_{T-t}=0) =0,\]
and hence \eqref{claim} holds.
By \eqref{claim}, 
\[1_{\{S_{\tau_{t_n,x_n}^*}> x_n\}}\to 1_{\{S_{\tau_{t,x}^*}> x\}}\]
a.s.~as $n\to\infty$, and consequently, 
\begin{eqnarray*}
g(t_n,x_n) &=& -\lambda\E\left[ 1_{\{S_{\tau_{t_n,x_n}^*}> x_n\}}e^{\lambda(S_{\tau^*_{t_n,x_n}}-x_n)-r\tau_{t_n,x_n}^*}\right]  \\
&\to& 
-\lambda\E\left[ 1_{\{S_{\tau_{t,x}^*}> x\}}e^{\lambda(S_{\tau^*_{t,x}}-x)-r\tau_{t,x}^*}\right]=g(t,x)
\end{eqnarray*}
by dominated convergence. This shows that $g$ is continuous on $[0,T)\times[0,\infty)$.

Now note that \eqref{Ux} holds for $t=T$ since $U(x,T)=1$ and $\tau^*$ in that case equals 0.

Next, since $x\mapsto U(t,x)$ is convex for $t\in[0,T)$, its right derivative exists everywhere on $[0,\infty)$ and its left 
derivative exists everywhere on $(0,\infty)$, and the set of points where the right and the left derivative differ (for a fixed $t\in[0,T)$)
is at most countable. Fix $(t,x)\in[0,T)\times(0,\infty)$ such that the right and the left (spatial) derivatives agree
at $(t,x)$, and let $\ep>0$. Denote $\tau^*=\tau^*_{t,x}$. Then 
\begin{eqnarray*}
U(t,x)-U(t,x+\ep)
&\leq& \E\left[e^{-r\tau^*}\left(e^{\lambda(x\vee S_{\tau^*}-x)}-e^{\lambda((x+\ep)\vee S_{\tau^*}-(x+\ep))}\right)\right]\\
&=&\E\left[e^{-r\tau^*}\left(e^{\lambda(S_{\tau^*}-x)}-1\right)1_{\{x<S_{\tau^*}\leq x+\ep\}}  \right]\\
&&+ \E\left[e^{-r\tau^*}\left(e^{\lambda(S_{\tau^*}-x)}-e^{\lambda(S_{\tau^*}-(x+\ep))}\right)1_{\{S_{\tau^*}> x+\ep\}}  \right]\\
&\leq&(e^{\lambda \ep}-1)\P\left(x<S_{\tau^*}\leq x+\ep\right)\\
&&+(1-e^{-\lambda\ep}) \E\left[e^{-r\tau^*}e^{\lambda(S_{\tau^*}-x)}1_{\{S_{\tau^*}> x+\ep\}}  \right].
\end{eqnarray*}
Dividing by $\ep$ and using that 
\[\lim_{\ep\downarrow 0}\P\left(x<S_{\tau^*}\leq x+\ep\right)=0,\]
we obtain that the right (spatial) derivative at $(t,x)$ satisfies
\begin{equation}
\label{liminf}
\lim_{\ep\downarrow 0}\frac{U(t,x+\ep)-U(t,x)}{\ep}\geq-\lambda\E\left[ 1_{\{S_{\tau^*}> x\}}e^{\lambda(S_{\tau^*}-x)-r\tau^*}\right].
\end{equation}

Similarly, for $\ep\in(0,x)$, 
\begin{eqnarray*}
 U(t,x)-U(t,x-\ep)
&\leq& \E\left[e^{-r\tau^*}\left(e^{\lambda(x\vee S_{\tau^*}-x)}-e^{\lambda((x-\ep)\vee S_{\tau^*}-(x-\ep))}\right)\right]\\
&=& \E\left[e^{-r\tau^*}\left(1-e^{\lambda(S_{\tau^*}-(x-\ep))}\right)1_{\{x-\ep<S_{\tau^*}\leq x\}}  \right]\\
&&+ \E\left[e^{-r\tau^*}\left(e^{\lambda(S_{\tau^*}-x)}-e^{\lambda(S_{\tau^*}-(x-\ep))}\right)1_{\{S_{\tau^*}> x\}}  \right]\\
&\leq& \left(1-e^{\lambda\ep}\right)\E\left[e^{\lambda (S_{\tau^*}-x)-r\tau^*}1_{\{S_{\tau^*}> x\}}  \right],\\
\end{eqnarray*}
so the left (spatial) derivative satisfies
\begin{equation}
\label{limsup}
\lim_{\ep\downarrow 0}\frac{U(t,x)-U(t,x-\ep)}{\ep}\leq-\lambda\E\left[ 1_{\{S_{\tau^*}> x\}}e^{\lambda(S_{\tau^*}-x)-r\tau^*}\right].
\end{equation}

Since the derivative exists at $(t,x)$, it follows from \eqref{liminf} and \eqref{limsup} that $U_x$ satisfies \eqref{Ux} at $(t,x)$.
Moreover, since $U_x(t,\cdot)$ coincides with the continuous function $g(t,\cdot)$ outside a countable set, and since the right (left) derivative
of a convex function is right (left) continuous, it follows that $U_x=g$ on $[0,T)\times[0,\infty)$, which completes the proof.
\end{proof}

\begin{corollary}{\bf (Creation condition)}
\label{bc}
The value function $U$ satisfies the boundary condition $U_x(t,0)+\lambda U(t,0)=0$ for $t<T$.
\end{corollary}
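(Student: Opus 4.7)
The plan is to evaluate the formula from Theorem~\ref{spatial-derivative} at $x=0$ and combine it with the obvious identity $U(t,0)=\E[e^{\lambda S_{\tau^*_{t,0}}-r\tau^*_{t,0}}]$ (which holds because $\tau^*_{t,0}$ is optimal for the problem started at $(t,0)$). Taking the difference, we obtain
\begin{equation*}
U_x(t,0)+\lambda U(t,0)=\lambda\E\left[\mathbf{1}_{\{S_{\tau^*_{t,0}}=0\}}e^{-r\tau^*_{t,0}}\right],
\end{equation*}
so the whole statement reduces to showing that $\P(S_{\tau^*_{t,0}}=0)=0$ whenever $t<T$.

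To prove this, I would first argue that $\tau^*_{t,0}>0$ almost surely. Since $t<T$, Proposition~\ref{propb} gives $b(t)>0$; by right-continuity of $b$ at $t$ there exists $\delta>0$ such that $b(t+s)\ge b(t)/2$ for all $s\in[0,\delta]$. Starting from $x=0$ we have $X^{0}_0=0<b(t)/2$, and by path continuity of $s\mapsto X^{0}_s$ the process stays below $b(t+\,\cdot\,)$ on a (random) neighbourhood of $0$, so $\tau^*_{t,0}\geq\eta$ a.s.\ for some strictly positive $\eta$. Once $\tau^*_{t,0}>0$ a.s., the claim $S_{\tau^*_{t,0}}>0$ a.s.\ follows from the well-known fact that the running supremum of a Brownian motion with drift started at $0$ is strictly positive at any strictly positive time (equivalently, $\P(S_s=0)=0$ for each $s>0$, and one can just condition on the value of $\tau^*_{t,0}$).

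Combining the two steps gives $\mathbf{1}_{\{S_{\tau^*_{t,0}}>0\}}=1$ a.s., whence Theorem~\ref{spatial-derivative} yields $U_x(t,0)=-\lambda\,\E[e^{\lambda S_{\tau^*_{t,0}}-r\tau^*_{t,0}}]=-\lambda U(t,0)$. The only delicate point is the justification that the stopping time is strictly positive starting from the boundary value $x=0$; this is exactly where the strict positivity $b(t)>0$ for $t<T$ (already established in Proposition~\ref{propb}) is used.
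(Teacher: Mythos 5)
Your proposal is correct and takes essentially the same route as the paper's own (very terse) proof: both evaluate the formula of Theorem~\ref{spatial-derivative} at $x=0$, use $b(t)>0$ for $t<T$ to get $\tau^*_{t,0}>0$ a.s., and deduce $S_{\tau^*_{t,0}}>0$ a.s.\ so that the indicator in \eqref{Ux} is identically one. One small polish: ``conditioning on the value of $\tau^*_{t,0}$'' is awkward since $\tau^*_{t,0}$ is path-dependent; the clean statement is that, because $0$ is regular for $(0,\infty)$ for $Y$, almost surely $S_s>0$ simultaneously for \emph{all} $s>0$, so $\{\tau^*_{t,0}>0\}\subset\{S_{\tau^*_{t,0}}>0\}$ up to a null set.
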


\begin{proof}
For $t\in[0,T)$ we have $\tau^*_{0,t}>0$ a.s., so $S_{\tau^*_{0,t}}>0$ a.s. Thus
\[U_x(t,0) = -\lambda\E\left[ e^{\lambda S_{\tau_{0,t}^*}-r\tau_{0,t}^*}\right] =-\lambda U(t,0).\]
\end{proof}

\begin{corollary}
The smooth fit condition holds, i.e. $U_x(t,b(t))=0$ for $t\in[0,T)$.
\end{corollary}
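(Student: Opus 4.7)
The plan is to deduce the smooth fit condition as an immediate consequence of the explicit representation \eqref{Ux} for $U_x$ established in Theorem \ref{spatial-derivative}. The key point is that evaluating that representation at the boundary point $x=b(t)$ kills the integrand almost surely, without any limiting or regularising argument.

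First I would fix $t\in[0,T)$ and note that, by Proposition \ref{propb}(i), the point $(t,b(t))$ lies in the stopping region $\D$: it is the complement in $[0,T)\times[0,\infty)$ of the continuation set $\C=\{0\le x<b(t)\}$. By the very definition of $\tau^*$ in \eqref{tau}, this forces $\tau^*_{t,b(t)}=0$ almost surely, and hence $S_{\tau^*_{t,b(t)}}=S_0=0$ a.s. On the other hand, Proposition \ref{propb} also gives $b(t)>0$ on $[0,T)$, so the indicator appearing in \eqref{Ux} satisfies
$$1_{\{S_{\tau^*_{t,b(t)}}>b(t)\}}=1_{\{0>b(t)\}}=0\quad\P\text{-a.s.}$$
Plugging this into \eqref{Ux} immediately yields $U_x(t,b(t))=0$.

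The hard part of smooth fit has already been absorbed into Theorem \ref{spatial-derivative}, which simultaneously produces the probabilistic formula for $U_x$ and proves its continuity on $[0,T)\times[0,\infty)$. Thus $U_x(t,b(t))$ is an unambiguous quantity, and the present corollary is a one-line substitution rather than a genuinely delicate local analysis near the free boundary.
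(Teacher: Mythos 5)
Your proof is correct, but it takes a slightly different route from the paper's. The paper argues via continuity: since $U\equiv 1$ on the stopping region, $U_x(t,x)=0$ for $x>b(t)$, and then continuity of $U_x$ (Theorem~\ref{spatial-derivative}) forces $U_x(t,b(t))=0$ by letting $x\downarrow b(t)$. You instead substitute $x=b(t)$ directly into the representation \eqref{Ux}, observe that $(t,b(t))\in\mathcal D$ so $\tau^*_{t,b(t)}=0$ a.s.\ and hence $S_{\tau^*}=0$, and use $b(t)>0$ to kill the indicator. Both are one-line corollaries of Theorem~\ref{spatial-derivative}, just reading off different parts of it (the paper the continuity, you the explicit formula). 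Your version is perhaps marginally more direct since it computes the value at the boundary rather than approaching it from one side; but note that the paper's proof of Theorem~\ref{spatial-derivative} itself establishes \eqref{Ux} at exceptional points (such as potentially the boundary) precisely by a continuity argument, so the two deductions are ultimately resting on the same facts.
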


\begin{proof}
This follows since $U(t,x)=1$ for $x\geq b(t)$ and $U_x$ is continuous.
\end{proof}

\section{Further regularity of $U$: the time derivative}\label{sec:Ut}

In this section we show that the time derivative of $U$ is continuous, see Theorem~\ref{time-derivative}.

\begin{lemma}
\label{Lip-in-t}
The function $U$ is Lipschitz continuous in $t$ on $[0,T_1]\times[0,\infty)$, uniformly with respect to $x$, for any $T_1\in(0,T)$.
\end{lemma}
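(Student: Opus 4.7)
The plan is a truncation-of-stopping-time argument. Fix $0\le t_1<t_2\le T_1$ and $x\ge 0$, set $\delta:=t_2-t_1$ and $s:=T-t_2\ge T-T_1>0$, and let $\tau^*:=\tau^*_{t_1,x}$ be optimal for the $(t_1,x)$-problem. The truncation $\hat\tau:=\tau^*\wedge s$ is admissible for $(t_2,x)$, and combining $U(t_1,x)=\E[e^{-r\tau^*+\lambda L^0_{\tau^*}}]$ with $U(t_2,x)\ge \E[e^{-r\hat\tau+\lambda L^0_{\hat\tau}}]$, while cancelling the common contribution from $\{\tau^*\le s\}$, gives (thanks to Proposition~\ref{propU}(ii) for non-negativity)
\[
0\le U(t_1,x)-U(t_2,x)\le \E\bigl[(e^{-r\tau^*+\lambda L^0_{\tau^*}}-e^{-rs+\lambda L^0_s})\mathbf 1_{\{\tau^*>s\}}\bigr].
\]

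Next I invoke the strong Markov property at the deterministic time $s$ together with the supermartingale property~\eqref{eq:supmart}: on $\{\tau^*>s\}$ the residual stopping time $\tau^*-s$ takes values in $(0,\delta]$ and, conditional on $\cF_s$, the expectation of $e^{-r(\tau^*-s)+\lambda(L^0_{\tau^*}-L^0_s)}$ is bounded by $U(T-\delta,X^x_s)$ (this value being the supremum over all such shifted stopping times starting at $X^x_s$ with horizon $\delta$). Factoring $e^{-rs+\lambda L^0_s}$ outside yields
\[
U(t_1,x)-U(t_2,x)\le \E\bigl[e^{-rs+\lambda L^0_s}\bigl(U(T-\delta,X^x_s)-1\bigr)\mathbf 1_{\{\tau^*>s\}}\bigr].
\]

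The heart of the proof is an $O(\delta)$ bound on this expectation, uniform in $x$. Using the identity $L^{0,y}_t=(S_t-y)^+$ from~\eqref{X2} together with the Gaussian tail $\P(S_\delta>y+z)\lesssim e^{-(y+z)^2/(2\sigma^2\delta)}$ for the supremum of a drifted Brownian motion, one extracts the sharp pointwise estimate
\[
U(T-\delta,y)-1\le C\sqrt\delta\,e^{-y^2/(c\delta)},\qquad y\ge 0,
\]
improving on the trivial uniform bound $O(\sqrt\delta)$. Substituting this back, and using that for $s\ge T-T_1>0$ the explicit joint law of $(S_s,Y_s)$ supplies a conditional density of $X^x_s$ uniformly bounded in $x$, together with uniform control of $\E[e^{p\lambda L^0_s}]$ for some $p>1$ via Hölder's inequality, the Gaussian factor integrates to produce a second $O(\sqrt\delta)$: indeed $\int_0^\infty e^{-y^2/(c\delta)}\,dy=O(\sqrt\delta)$. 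The product of the two $\sqrt\delta$ factors gives the required Lipschitz constant. The main obstacle is precisely this second integration, which forces one to decouple $e^{\lambda L^0_s}$ from $X^x_s$ without losing a factor $\delta^{1/4}$ via a naive Cauchy--Schwarz; this is handled by splitting on $\{X^x_s\le\sqrt\delta\}$ versus $\{X^x_s>\sqrt\delta\}$ and exploiting the Gaussian exponent on the latter set, both controls being uniform in $(x,s)$ on $[0,\infty)\times[T-T_1,T]$.
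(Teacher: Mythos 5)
Your reduction via the strong Markov property at $s=T-t_2$ to the estimate
$U(t_1,x)-U(t_2,x)\le \E\bigl[e^{-rs+\lambda L^0_s}\bigl(U(T-\delta,X^x_s)-1\bigr)1_{\{\tau^*>s\}}\bigr]$
is valid, and your diagnosis of the obstacle is sharp: neither the trivial bound $U(T-\delta,\cdot)-1=O(\sqrt\delta)$ alone (which, combined with $\P(\tau^*>s)$, produces a spurious logarithm because $b(T-\delta)\sim\sigma\sqrt{\delta\log(1/\delta)}$) nor a crude decoupling alone suffices. However, the two specific devices you propose for the second integration do not close the gap. H\"older with conjugate exponents $(p,q)$, $p>1$, yields $\bigl(\E[e^{-q(X^x_s)^2/(c\delta)}]\bigr)^{1/q}=O(\delta^{1/(2q)})$, which is $O(\delta^{1/2-\ep})$ rather than $O(\sqrt\delta)$; and the split at $\{X^x_s>\sqrt\delta\}$ leaves $e^{-1/c}\,\E[e^{\lambda L^0_s}]=O(1)$ on the far set, which is not small. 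The clean way to get the second $\sqrt\delta$ is to avoid decoupling altogether: condition on $S_s$ (which fixes $L^0_s=(S_s-x)^+$), note that given $S_s$ the variable $X^x_s=x\vee S_s-Y_s$ has a conditional density bounded uniformly for $s\ge T-T_1>0$, so that $\E\bigl[e^{-(X^x_s)^2/(c\delta)}\mid S_s\bigr]\le K\sqrt\delta$ almost surely, and then conclude with $K\sqrt\delta\,\E[e^{\lambda(S_s-x)^+}]\le K\sqrt\delta\,\E[e^{\lambda S_T}]$. With this replacement your argument does give the Lipschitz bound, but as written the integration step is a genuine gap.

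You should also be aware that the paper follows a much shorter, purely elementary route that bypasses all of this. It never invokes the Markov property nor any bound on $U(T-\delta,\cdot)-1$. Taking $\hat\tau=\tau^*\wedge(T-t_2)$ as a suboptimal stopping time for the $(t_2,x)$-problem, restricting to $\{T-t_2<\tau^*\le T-t_1\}$ exactly as you do, and then using monotonicity of $S$ and $e^{-r\cdot}$, the increment is bounded pathwise by $e^{-r(T-t_2)}\bigl(e^{\lambda(x\vee S_{T-t_1}-x)}-e^{\lambda(x\vee S_{T-t_2}-x)}\bigr)$. The key observation is then the elementary pointwise inequality $e^{-\lambda x}\bigl(e^{\lambda(x\vee S_{T-t_1})}-e^{\lambda(x\vee S_{T-t_2})}\bigr)\le e^{\lambda S_{T-t_1}}-e^{\lambda S_{T-t_2}}$, which reduces the entire claim to Lipschitz continuity of $t\mapsto\E[e^{\lambda S_t}]$ on $[T-T_1,T]$. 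That, in turn, is immediate from the explicit density of $S_t$. This avoids the delicate two-step estimate you construct and the pitfalls in its integration.
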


\begin{proof}
Let $t_1,t_2\in[0,T_1]$ with $t_1< t_2$, let $x\in[0,\infty)$, and denote $\tau:=\tau^*_{t_1,x}$. Then, recalling that $U(\,\cdot\,,x)$ is decreasing, we get 
\begin{eqnarray*}
0 &\leq& U(t_1,x)-U(t_2,x)\\
&\leq& \E\left[e^{\lambda(x\vee S_\tau-x)-r\tau}-e^{\lambda(x\vee S_{\tau\wedge(T-t_2)}-x)-r(\tau\wedge(T-t_2))}\right]\\
&=& \E\left[\left(e^{\lambda(x\vee S_\tau-x)-r\tau}-  e^{\lambda(x\vee S_{T-t_2}-x)-r(T-t_2)}\right)1_{\{T-t_2<\tau\leq T-t_1\}}\right]\\
&\leq& e^{-\lambda x}e^{-r(T-t_2)}\E\left[e^{\lambda (x\vee S_{T-t_1})}-e^{\lambda (x\vee S_{T-t_2})}\right]\\
&\leq& \E\left[e^{\lambda  S_{T-t_1}}-e^{\lambda S_{T-t_2}}\right],
\end{eqnarray*}
where we for the last inequality used
\begin{align*}
e^{\lambda (x\vee S_{T-t_1})}-e^{\lambda (x\vee S_{T-t_2})}= 1_{\{x<S_{T-t_1}\}}\left(e^{\lambda S_{T-t_1}}-
e^{\lambda(x\vee S_{T-t_2})}
\right).
\end{align*}
By explicit formulas, see e.g. \cite[Section 3.5.C]{KS1},
\[\P(S_t\geq z)=\int_0^t\frac{z}{\sigma\sqrt{2\pi s^3}} e^{-\frac{(z+\mu s)^2}{2\sigma^2 s}}\,ds,\]
so 
\[\P(S_t\in dz)=\int_0^t\frac{1}{\sigma\sqrt{2\pi s^3}}\left(\frac{z+\mu s}{\sigma^2 s}z-1\right) 
e^{-\frac{(z+\mu s)^2}{2\sigma^2 s}}\,ds\,dz.\]
Thus
\begin{eqnarray*}
\E\left[e^{\lambda  S_{t}}\right] &=& \int_0^\infty e^{\lambda z}
\int_0^t\frac{1}{\sigma\sqrt{2\pi s^3}}\left(\frac{z+\mu s}{\sigma^2 s}z-1\right) e^{-\frac{(z+\mu s)^2}{2\sigma^2 s}}\,ds\,dz\\
&=:& f(t)
\end{eqnarray*}
and 
\begin{eqnarray*}
f'(t) &=& \int_0^\infty e^{\lambda z}
\frac{1}{\sigma\sqrt{2\pi t^3}}\left(\frac{z+\mu t}{\sigma^2 t}z-1\right) e^{-\frac{(z+\mu t)^2}{2\sigma^2 t}}\,dz.
\end{eqnarray*}
Since $f'(t)$ is bounded for $t\in[T-T_1,T]$, the function $U$ is Lipschitz on $[0,T_1]\times[0,\infty)$
\end{proof}

The following theorem is the main result of the current section.

\begin{theorem}
\label{time-derivative}
The time derivative $U_t$ is continuous on $[0,T)\times (0,\infty)$.
\end{theorem}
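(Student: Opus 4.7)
The plan. The proof of continuity of $U_t$ on $[0,T)\times(0,\infty)$ decomposes naturally into three parts: continuity inside the stopping region, continuity inside the continuation region, and continuity across the free boundary $\{(t, b(t)) : t \in [0,T)\}$.

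Inside $\mathcal{D}$, one has $U \equiv 1$, hence $U_t \equiv 0$. Inside the continuation region, Proposition~\ref{fbp} yields $U \in \C^{1,2}$, so $U_t$ exists and is continuous locally, and the PDE $\mathcal{L}U = 0$ gives the pointwise identity
$$U_t(t,x) = rU(t,x) - \mu U_x(t,x) - \tfrac{\sigma^2}{2}U_{xx}(t,x), \qquad (t,x) \in \mathrm{int}(\mathcal{C}).$$
It then remains to show continuity across the boundary, which amounts to proving $U_t(t_n, x_n) \to 0$ for every sequence $(t_n, x_n) \to (t_0, b(t_0))$ with $(t_n, x_n) \in \mathrm{int}(\mathcal{C})$ and $t_0 \in [0,T)$. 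By continuity of $U_x$ (Theorem~\ref{spatial-derivative}) and the smooth-fit corollary, $U_x(t_n, x_n) \to 0$, and by continuity of $U$, $U(t_n, x_n) \to 1$. Substituting into the PDE identity, the task reduces to proving $U_{xx}(t_n, x_n) \to 2r/\sigma^2$.

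The main obstacle lies in this last step, i.e., proving that $U_{xx}$ extends continuously up to the free boundary. My proposed approach is to localize and flatten the boundary via a change of variables $(t,x) \mapsto (t, b(t) - x)$, then study the transformed function $W(t,y) := U(t, b(t) - y) - 1$ on a one-sided strip, which satisfies zero Dirichlet and zero Neumann data on $\{y = 0\}$ thanks to smooth fit. Applying parabolic Schauder estimates up to the boundary, and using the Lipschitz-in-time regularity from Lemma~\ref{Lip-in-t} to absorb the contribution of $b$ in a weak sense (or after approximating $b$ from inside by smoother curves obtained e.g.\ by convolution and invoking monotonicity), would yield a continuous extension of $W_{yy}$ and hence of $U_{xx}$ with the required value. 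An alternative, purely probabilistic route avoids differentiating $b$ by working directly with the representation $U(t,x) = \E[e^{\lambda(x \vee S_{\tau^*} - x) - r\tau^*}]$ and analyzing the difference quotient $(U(t+h, x) - U(t, x))/h$ near $(t_0, b(t_0))$, using the continuity of $\tau^*$ from Proposition~\ref{cont-tau} together with the key observation that $\tau^*_{t_0, b(t_0)} = 0$ forces $U$ to match smoothly in both $x$ and $t$ at boundary points. Either route delivers the boundary regularity that is the crux of the argument.
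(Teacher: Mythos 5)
Your decomposition is the right one: $U_t\equiv 0$ in the interior of $\mathcal D$, interior regularity in $\mathcal C$ from Proposition~\ref{fbp}, and then continuity across the free boundary is the crux. However, both routes you propose for the boundary step have problems.

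The first route (flattening the boundary via $(t,x)\mapsto(t,b(t)-x)$ and invoking parabolic Schauder estimates up to $\{y=0\}$) cannot be carried out with the regularity established so far. At this point in the paper the boundary $b$ is only known to be continuous and non-increasing (Proposition~\ref{propb}); computing $W_t$ for $W(t,y)=U(t,b(t)-y)-1$ produces a $b'(t)U_x$ term that has no meaning, and boundary Schauder estimates require at least parabolic H\"older regularity of the boundary, which is not available. The proposed fix of mollifying $b$ from inside does not obviously survive the passage to the limit: the constants in Schauder estimates depend on the boundary's modulus of regularity and would blow up without an a priori H\"older bound. Moreover, you have reduced the problem to proving $U_{xx}(t_n,x_n)\to 2r/\sigma^2$, which is a strictly stronger and harder statement than what is needed; in fact in the paper it is the continuity of $U_t$ that is proved directly, and the behaviour of $U_{xx}$ at the boundary is then a free by-product of the PDE in $\mathcal C$.

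Your "alternative, purely probabilistic route" is the one that works and is essentially what the paper does, but as stated it is only a sketch of a plan rather than an argument. The missing content is the actual estimate. Concretely, the paper fixes $T_0<T$, takes $t_1<t_2\leq T_0$, sets $\tau:=\tau^*_{t_1,x}\wedge(T_0-t_2)$, and uses the martingale property \eqref{eq:mart} (for $t_1$) and the supermartingale property \eqref{eq:supmart} (for $t_2$) to write
\begin{equation*}
0\leq U(t_1,x)-U(t_2,x)\leq \E\Bigl[e^{\lambda(x\vee S_\tau-x)-r\tau}\bigl(U(t_1+\tau,X^x_\tau)-U(t_2+\tau,X^x_\tau)\bigr)\Bigr].
\end{equation*}
On $\{\tau=\tau^*_{t_1,x}\}$ the integrand vanishes because there $U(t_1+\tau,X^x_\tau)=1$ and, by monotonicity in time and $U\geq 1$, also $U(t_2+\tau,X^x_\tau)=1$; on $\{\tau=T_0-t_2\}$ the difference is bounded by $C_0(t_2-t_1)$ via Lemma~\ref{Lip-in-t}. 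Dividing by $t_2-t_1$ and applying Cauchy--Schwarz yields
\begin{equation*}
0\leq -U_t(t,x)\leq C_0\sqrt{\E\bigl[e^{2\lambda S_T}\bigr]}\,\sqrt{\P\bigl(\tau^*_{t,x}\geq T_0-t\bigr)}\,,
\end{equation*}
and the right-hand side vanishes as $(t,x)\to(t_0,b(t_0))$ by Proposition~\ref{cont-tau} and $\tau^*_{t_0,b(t_0)}=0$. This is precisely the estimate your sketch alludes to but does not produce; without it, the claim that continuity of $\tau^*$ "forces $U$ to match smoothly in $t$" is an assertion rather than a proof. In short: drop the $U_{xx}$/Schauder route entirely, and flesh out the difference-quotient route by making explicit use of the (super)martingale properties \eqref{eq:supmart}--\eqref{eq:mart} and the Lipschitz bound of Lemma~\ref{Lip-in-t}.
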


\begin{proof}
Fix $T_0\in(0,T)$, let $t_1,t_2\in[0,T_0]$ with $t_1<t_2$ and $T_0+t_2-t_1<T$, and let $x\in[0,\infty)$. Define $\tau:=\tau_{t_1,x}^*\wedge(T_0-t_2)$, 
and note that $U(t_1+\tau,X^x_\tau)=U(t_2+\tau,X^x_\tau)=1$ on the set where $\tau=\tau^*_{t_1,x}$ thanks to (ii) in Proposition~\ref{propU}.
Consequently, using \eqref{eq:supmart} and \eqref{eq:mart}, we have
\begin{align*}
0 \leq & U(t_1,x)-U(t_2,x) \\
\leq& \E\left[ e^{\lambda(x\vee S_\tau-x)-r\tau}\left(U(t_1+\tau,X^x_{\tau})-U(t_2+\tau,X^x_{\tau})\right)\right]\\
=& 
\E\left[ e^{\lambda(x\vee S_\tau-x)-r\tau}\left(U(t_1+\tau,X^x_{\tau})-U(t_2+\tau,X^x_{\tau})\right)1_{\{\tau=T_0-t_2\}}\right]\\
\leq& \E\Big[ e^{\lambda S_{T_0-t_2}-r(T_0-t_2)}\Big(U(T_0+t_1-t_2,X^x_{T_0-t_2})\\
&\hspace{+3.5cm}-U(T_0,X^x_{T_0-t_2})\Big)
1_{\{\tau=T_0-t_2\}}\Big]\\
\leq& C_0(t_2-t_1)\E\left[ e^{\lambda S_{T}}1_{\{\tau=T_0-t_2\}}\right],
\end{align*}
where $C_0$ is a Lipschitz constant of $t\mapsto U(t,x)$, see Lemma~\ref{Lip-in-t}, which depends on $T_0$.
In particular, for $(t,x)\in\mathcal C$ with $t\leq T_0$, we have 
\begin{eqnarray*}
0 &\leq& U_t(t,x)\leq C_0 \E\left[ e^{\lambda S_{T}}1_{\{\tau^*_{t,x}\geq T_0-t\}}\right]\\
&\leq& C_0\sqrt{\E\left[e^{2\lambda S_T}\right]}\sqrt{\P \left(\tau^*_{t,x}\geq T_0-t\right)}
\end{eqnarray*}
by the Cauchy-Schwarz inequality. By Proposition~\ref{cont-tau}, $\tau_{t,x}^*\to \tau_{t_0,b(t_0)}^*=0$ a.s. as
$(t,x)\to (t_0,b(t_0))$ for $t_0< T_0$. Consequently, $\P \left(\tau^*_{t,x}\geq T_0-t\right)\to 0$, and hence
$U_t(t,x)\to 0$ as $(t,x)\to (t_0,b(t_0))$. Since $U_t$ is continuous in the interior of $\mathcal D$ and in $\mathcal C$, this
shows that $U_t$ is continuous on $[0,T_0)\times(0,\infty)$.
Consequently, since $T_0$ is arbitrary, this shows that $U_t$ is continuous on $[0,T)\times (0,\infty)$.
\end{proof}

\begin{remark}
The $C^1$-differentiability of the value function is typically referred to as the `smooth fit' condition in optimal stopping theory.
This is a well-known condition that can be utilized in perpetual problems to produce a candidate solution, which then can be verified 
to equal the value function. While the smooth fit condition is generally believed to hold also for time-dependent problems (with sufficiently 
smooth underlying data), a formal verification of this fact is often lacking. In fact, in most studies of time-dependent
optimal stopping problems it is only shown that smooth fit holds in 
the {\em spatial} variable for each fixed time, 
see e.g. \cite{DKS}, \cite{E} and \cite[Lemma 2.7.8]{KS2}.
 
In that respect, Theorems~\ref{spatial-derivative} and \ref{time-derivative} 
go beyond established theory for optimal stopping problems.
\end{remark}

\section{Proof of Theorem~\ref{main}}\label{sec:proof}
Define a new function $v:[0,T]\times[0,\infty)\to[0,\infty)$ by 
\[v(t,x)=\int_0^x U(t,y)\,dy.\]
From Theorem~\ref{spatial-derivative} and Theorem~\ref{time-derivative} it is immediate to see that $v\in\C^{1,2}([0,T)\times(0,\infty))$ and it is continuous everywhere. Moreover $t\mapsto v(t,x)$ is decreasing and $x\mapsto v(t,x)$ is concave due to Proposition \ref{propU}. 

Now we want to show that $(v,b)$ solves $(i)$ to $(v)$ of Theorem \ref{verification} so that $v=V$ and $D^b$ as in \eqref{optcont} is optimal.

Since $U\ge 1$ and $U(T,x)=1$ we have $v_x\ge 1$ and $v(T,x)=x$. For $t\in[0,T)$ and $0<x<b(t)$ and recalling \eqref{freeb1} we obtain
\begin{align*}
&(v_t+\mu v_x+\tfrac{\sigma^2}{2}v_{xx}-rv)(t,x)\\
&=\int_0^x (U_t-rU)(t,y)dy+\mu U(t,x)+\tfrac{\sigma^2}{2}U_x(t,x)\\
&=-\int_0^x (\tfrac{\sigma^2}{2}U_{xx}+\mu U_x)(t,y)dy+\mu U(t,x)+\tfrac{\sigma^2}{2}U_x(t,x)\\
&=\tfrac{\sigma^2}{2}U_x(t,0)+\mu U(t,0)=0
\end{align*} 
where the last equality follows from the creation condition at zero of Corollary~\ref{bc}. Repeating the same calculation for $t\in[0,T)$ and $x\ge b(t)$ and using \eqref{freeb2} we then find $v_t+\mu v_x+\tfrac{\sigma^2}{2}v_{xx}-rv\le 0$. Therefore we have verified $(i)$ to $(v)$ of Theorem~\ref{verification}, since $(ii)$ is obviously true.

\section{Concluding remarks}
\label{sec:remarks}

The connection established in Theorem~\ref{main} enables the use of techniques from optimal stopping
theory in the study of the dividend problem. For example, the precise asymptotic behaviour of the boundary can
be derived, and the boundary can be characterised in terms of an integral equation. Both these results have
been derived in the context of Russian options (recall the remark at the end of Section~\ref{sec:osp-intro})
and we only provide their statements.

\subsection{Asymptotic behaviour of the boundary (\cite{E})}

The optimal stopping boundary satisfies 
\[\lim_{t\uparrow T} \frac{b(t)}{\sqrt{ (T-t)\ln\frac{1}{(T-t)}}}=\sigma,\]
where we notice that our $b(t)$ is equal to $\lambda^{-1}\ln a_t$ with $a_t$ as in \cite{E}.
This asymptotic behaviour is the first term in an expansion that was found in \cite{G2}. It is important to remark that in \cite{G2} the author needs an a priori assumption regarding additional regularity of $b$. Here, on the other hand, relying on the asymptotic formula from 
\cite{E}, together with the established connection between the Russian option and the dividend problem, we do not require any additional assumptions.

\subsection{An integral equation for the boundary (\cite{P})}

The boundary $b$ solves the integral equation
\begin{equation}
\label{inteqn}
1=\E\left[e^{\lambda(b(t)\vee S_{T-t}-b(t))-r(T-t)}\right]+r\int_0^{T-t}e^{-rs}\P\left(X^{b(t)}_s\geq b(t+s)\right)\,ds.
\end{equation}
The above formula may be deduced from \cite{P} via algebraic transformations. Alternatively, it can be directly verified by applying Dynkin's formula to $e^{\lambda L^0_t(X)-rt}U(t,X_t)$, since $U\in \C^1$ with $U_{xx}$ bounded, and using \eqref{freeb1}, \eqref{freeb2} and $U(t,b(t))=1$. 
Moreover, $b$ is the unique solution of \eqref{inteqn} in the class of continuous and positive functions.
%(The expected value and the probability appearing in \eqref{inteqn} can be calculated explicitly.)

\subsection{The infinite horizon case}

Finally, we remark that the connection suggested by Theorem~\ref{main}
between the infinite horizon dividend problem (see \cite{RS96}) and the perpetual optimal 
stopping problem 
\[U(x)=\sup_{\tau\geq 0}\E\left[e^{\lambda (x\vee S_\tau-x)-r\tau} \right]\]
(which is closely related to the value function of a Russian option, see \cite{SS})
also holds. Indeed, one way to see this is to do the obvious changes in the scheme of the current paper. 
Alternatively, since these problems can be solved explicitly, it is straightforward to check that
$V=U'$ by explicit calculations. Notably, however,
%despite the overlap between authors that have written about the Russian option and authors that 
%have studied the dividend problem, see \cite{JS}, \cite{RS96} and \cite{SS}, 
this connection seems unnoticed even in the perpetual setting.


\begin{thebibliography}{99}

\bibitem{A09} 
Avanzi, B. Strategies for dividend distribution: a review. 
N.~Am.~Actuar.~J.~13, no. 2 (2009) 217-251.

\bibitem{BC67} 
Bather, J.A.~and Chernoff, H. Sequential decisions in the control of a spaceship. 
Proc.\ Fifth Berkeley Symposium on Mathematical Statistics and Probability \textbf{3} (1966) 181-207.

\bibitem{BSW80}
Bene\v s, V.E., Shepp, L.A.~and Witsenhausen, H.S. 
Some Solvable Stochastic Control Problems. Stochastics 4 (1980) 39-83.

\bibitem{BenthReikvam} 
Benth, F.E.~and Reikvam, K. A connection between singular stochastic control and optimal stopping. 
Appl.\ Math.\ Optim.\ 49 (2004) 27-41.

\bibitem{BoetiusKohlmann} 
Boetius, F.~and Kohlmann, M. 
Connections between optimal stopping and singular stochastic control. 
Stochastic Process.~Appl.\ 77 (1998) 253-281.

\bibitem{BudhirajaRoss} 
Budhiraja, A.~and Ross, K. Optimal stopping and free boundary characterizations for some Brownian control problems. 
Ann.\ Appl.\ Probab.\ 18 (6) (2008) 2367-2391.

\bibitem{DF}
De Angelis, T.~and Ferrari, G. A stochastic partially reversible investment problem on a finite time-horizon: free-boundary analysis. Stochastic Process. Appl. 124 (2014), 4080-4119. 

\bibitem{DF2}
De Angelis, T.~and Ferrari, G. Stochastic nonzero-sum games: a new connection between singular control and optimal stopping. arXiv: (2016).

\bibitem{DeAFeMo15b} 
De Angelis, T., Ferrari, G.~and Moriarty, J. 
A nonconvex singular stochastic control problem and its related optimal stopping boundaries. 
SIAM J.~Control Optim.~53 (3)~(2015)~1199-1223.

\bibitem{DeAFeMo15c} 
De Angelis, T., Ferrari, G.~and Moriarty, J. 
A solvable two-dimensional degenerate singular stochastic control problem with non-convex costs. arXiv:1411.2428~(2015).

\bibitem{DeF}
De Finetti, B. Su un'impostazione alternativa della teoria collettiva del rischio. Transactions of the XVth international congress of Actuaries.~Vol.~2.~No.~1 (1957), 433-443.

\bibitem{DKS}
Duistermaat, J., Kyprianou, A., and van Schaik, K. Finite expiry Russian options. Stochastic Process. Appl. 115 (2005), 609-638.

\bibitem{E}
Ekstr\"om, E. Russian options with a finite time horizon. J. Appl. Probab. 41 (2004), 313-326.

\bibitem{EJ}
Ekstr\"om, E. and Janson, S. The inverse first-passage problem and optimal stopping. 
Ann. Appl. Probab. 26 (2016), 3154-3177. 

\bibitem{ElKK88}
El Karoui, N.~and Karatzas, I. 
Probabilistic aspects of finite-fuel, reflected follower problems. 
Acta Applicandae Math.\ 11 (1988) 223-258.

%\bibitem{Fr}
%Friedman, A. Partial differential equations of parabolic type. Prentice Hall (1964). 

%\bibitem{GS}
%Graversen, S. E. and Shiryaev, A. An extension of P. L\`evy's distributional properties to the case of a Brownian motion with drift. Bernoulli 6 (2000), no. 4, 615-620.

\bibitem{G1}
Grandits, P. Optimal consumption in a Brownian model with absorption and finite time horizon. 
Appl.~Math.~Optim.~67 (2) (2013), 197-241.

\bibitem{G2}
Grandits, P. Existence and asymptotic behavior of an optimal barrier for an optimal consumption problem in a Brownian model with absorption and finite time horizon. 
Appl.~Math.~Optim.~69 (2) (2014), 233-271.

\bibitem{G}
Grandits, P. An optimal consumption problem in finite time with a constraint on the ruin probability. 
Finance Stoch.~19 (4) (2015), 791-847.

\bibitem{GT08} 
Guo, X.~and Tomecek, P. 
Connections between singular control and optimal switching. 
SIAM J.~Control Optim.~47 (1)~(2008)~421-443.

\bibitem{JS}
Jeanblanc-Piqu\'e, M. and Shiryaev, A. 
Optimization of the flow of dividends. 
Russian Math. Surveys 50 (1995), no. 2, 257-277.

\bibitem{Karatzas85}
Karatzas, I. Probabilistic aspects of finite-fuel stochastic control.
Proc.\ Natl.\ Acad.\ Sci.\ USA 82 (1985) 5579-5581.

\bibitem{KS}
Karatzas, I. and Shreve, S. Connections between optimal stopping and singular stochastic control I. Monotone follower problems, SIAM J. Control Optim. 22 (1984) 856-877.

\bibitem{KS3}
Karatzas, I. and Shreve, S. Equivalent models for finite-fuel stochastic control. Stochastics 18 (1986) 245-276.

\bibitem{KS1}
Karatzas, I. and Shreve, S. Brownian motion and stochastic calculus. Second edition. Graduate Texts in Mathematics, 113. 
Springer-Verlag, New York, 1991.

\bibitem{KS2}
Karatzas, I. and Shreve, S.
Methods of mathematical finance.
Applications of Mathematics (New York), 39. Springer-Verlag, New York, 1998.

\bibitem{P}
Peskir, G. The Russian option: finite horizon. Finance Stoch. 9 (2005), 251-267.

\bibitem{P06} 
Peskir, G.
On reflecting Brownian motion with drift. Proc.~37th ISCIE Int.~Symp.~Stoch.~Syst.~Theory Applic., Inst. Systems Control Inform. Engrs., Kyoto, (2006), 1-5. 

\bibitem{P14}
Peskir,G. 
A probabilistic solution to Stroock-Williams equation. Ann.~Probab.~42 (2014), 2197-2206

\bibitem{RS96}
Radner, R.~and Shepp, L. 
Risk vs.~profit potential: A model for corporate strategy. 
J.~Econom.~Dynam.~Control 20, no.~8 (1996), 1373-1393.

\bibitem{RY} Revuz, D. and Yor, M. Continuous martingales and Brownian motion, Third edition. Grundlehren der Mathematischen Wissenschaften 
[Fundamental Principles of Mathematical Sciences], 293. Springer-Verlag, Berlin, 1999.

\bibitem{SS}
Shepp, L.~and Shiryaev, A.
The Russian option: reduced regret. 
Ann. Appl. Probab. 3 (1993), no. 3, 631-640. 

\bibitem{SLG}
Shreve, S.E., Lehoczky, J.P.~and Gaver, D.P. Optimal consumption for general diffusions with absorbing and reflecting barriers. 
SIAM J.~Control Optim.~22 (1) (1984), 55-75.

\bibitem{Taksar85} 
Taksar, M.I. Average optimal singular control and a related stopping problem. 
Math.~Oper.~Res.~10 (1) (1985) 63-81.


\end{thebibliography}
\end{document}